\setlist[enumerate]{leftmargin=.5in}
\setlist[itemize]{leftmargin=.5in}
\crefname{hypothesis}{Hypothesis}{Hypotheses}
\title{Microlocal Analysis of a Compton Tomography Problem\thanks{Submitted to the editors 01/14/2020.
\funding{The first author was supported by the U.S. Department of Homeland Security, Science and Technology Directorate, Office of University Programs, under Grant Award 2013-ST-061-ED0001. The work of the second author was partially  supported by the
U.S. National Science Foundation under Grant DMS 1712207.}}}
\author{James Webber\thanks{Department of Electrical and Computer
Engineering, Tufts University, Medford, MA USA 
  (\email{James.Webber@tufts.edu}}).
\and Eric Todd Quinto\thanks{Department of Mathematics, Tufts
University, Medford, MA USA 
  (\email{Todd.Quinto@tufts.edu}).}}
\tikzset{snake it/.style={decorate, decoration=snake}}
\newcommand{\ra}{r,\alpha}
\newcommand{\Tsty}{T^*(Y)}
\newcommand{\ta}{\theta_{\alpha}}
\newcommand{\smo}{\setminus \mathbf{0}}
\newcommand{\zero}{\mathbf{0}}
\newcommand{\txi}{\tilde{\xi}}
\newcommand{\partxf}[2]{\frac{\partial #2}{\partial x_{#1}}}
\newcommand{\partxif}[2]{\frac{\partial #2}{\partial \xi_{#1}}}
\newcommand{\abs}[1]{\left|#1\right|}                 
\newcommand{\paren}[1]{\left(#1\right)}               
\newcommand{\bparen}[1]{\left[#1\right]}               
\newcommand{\sparen}[1]{\left\{#1\right\}}      
\newcommand{\dd}{\mathrm{d}}  
\newcommand{\supp}{\operatorname{supp}} 
\newcommand{\vp}{\varphi}
\newcommand{\Cc}{\mathcal{C}}
\newcommand{\tCc}{\widetilde{\mathcal{C}}}
\newcommand{\Dc}{\mathcal{D}}
\newcommand{\Ec}{\mathcal{E}}
\newcommand{\Fc}{\mathcal{F}}
\newcommand{\Tc}{\mathcal{T}}
\newcommand{\WF}{\mathrm{WF}}                         
\newcommand{\vb}{\mathbf{b}}
\newcommand{\vc}{\mathbf{c}}
\newcommand{\vs}{\mathbf{s}}
\newcommand{\vd}{\mathbf{d}}
\newcommand{\vv}{\mathbf{v}}
\newcommand{\vw}{\mathbf{w}}
\newcommand{\vx}{\mathbf{x}}
\newcommand{\vy}{\mathbf{y}}
\newcommand{\vg}{\mathbf{g}}
\newcommand{\otp}{[0,2\pi]}
\newcommand{\rr}{{{\mathbb R}}}
\newcommand{\zz}{{{\mathbb Z}}}
\newcommand{\rtwo}{{{\mathbb R}^2}}
\newcommand{\rn}{{{\mathbb R}^n}}
\newcommand{\st}{\hskip 0.3mm : \hskip 0.3mm}
\newcommand{\be}{\begin{equation}}
\newcommand{\ee}{\end{equation}}
\newcommand{\bea}{\begin{eqnarray}}
\newcommand{\eea}{\end{eqnarray}}
\newcommand{\bean}{\begin{eqnarray*}}
\newcommand{\eean}{\end{eqnarray*}}
\newcommand{\bel}[1]{\begin{equation}\label{#1}}
\newcommand{\eel}[1]{{\label{#1}\end{equation}}}
\DeclareMathOperator*{\argmin}{arg\,min}
\begin{document}

\maketitle

\begin{abstract}
 Here we present a novel microlocal analysis of a new toric section
transform which describes a two dimensional image reconstruction
problem in Compton scattering tomography and airport baggage
screening. By an analysis of two separate limited data problems for
the circle transform and using microlocal analysis, we show that the
canonical relation of the toric section transform is 2--1. This
implies that there are image artefacts in the filtered backprojection
reconstruction. We provide explicit expressions for the expected
artefacts and demonstrate these by simulations. In addition, we prove
injectivity of the forward operator for $L^\infty$ functions supported
inside the open unit ball.  We present reconstructions from simulated
data using a discrete approach and several regularizers with varying
levels of added pseudo-random noise.
\end{abstract}

\begin{keywords}
microlocal analysis, Compton scattering, tomography, algebraic image reconstruction
\end{keywords}

\begin{AMS}
44A12, 35S30, 65R32, 94A08
\end{AMS}

\section{Introduction} We consider the Compton scattering tomography
acquisition geometry displayed in figure \ref{fig1}, which illustrates
an idealized source--detector geometry in airport baggage screening
representing the Real Time Tomography (RTT) geometry \cite{will}.
See appendix \ref{app1} for more detail on the
potential for the application of this work in airport baggage
screening. The inner circle (of smaller radius) represents a ring of
fixed energy sensitive detectors and the outer circle a ring of fixed,
switched X-ray sources, which we will assume for the purposes of this
paper can be simulated to be monochromatic (e.g. by varying the X-ray
tube voltage and taking finite differences in energy or by source
filtering \cite{mono1,mono2}). It is noted that the RTT geometry is
three dimensional \cite{will}, but we assume a two dimensional
scattering geometry as done in \cite{me}. Further we note that in the desired application in airport baggage screening we expect the data to be very noisy. Later in section \ref{res} we simulate the noisy data using an additive Gaussian model with a significant level (up to $5\%$) and show that we can combat the noise effectively using the methods of \cite{IRtools} (specifically the ``IRhtv" method).

Compton scattering describes the inelastic scattering process of a photon
with charged particles (usually electrons). The energy loss is given by the
equation
\begin{equation}
\label{equ1}
E'=\frac{E}{1+(E/E_0)(1-\cos\omega)},
\end{equation}
\begin{figure}[!h]
\centering
\begin{tikzpicture}[scale=4]
\coordinate (S) at ({-cos(30)},0.5);
\coordinate (D) at ({cos(30)}, 0.5);
\coordinate (w) at (-0.5,0.86);
\coordinate (a) at (-0.13,1.22);
\draw [red] (0.3,0.15) rectangle (0.6,0.4);
\draw (0.8,0)node[right]{$f$}->(0.6,0.15);
\draw [green] (0.43,0.5) circle [radius={cos(30)-0.43}];
\draw pic[draw=orange, <->,"$\omega$", angle eccentricity=1.5] {angle = D--w--a};
\draw [domain=30:150] plot ({cos(\x)}, {sin(\x)});
\draw [thin, dashed] (S) -- (D);
\draw [snake it][->](S) -- (w);
\draw [snake it][->](w) -- (D);
\draw [thin,dashed] (w)node[above]{$\vx$}--(a);
\node at (-0.95,0.48) {$\vs$};
\node at (0.95,0.5) {$\vd$};
\node at (-0.75,0.75) {$E$};
\node at (0,0.825) {$E'$};
\node at (0,-0.065) {$C_2$};
\node at (0,1.06) {$C_1$};
\draw [domain=-0.866:0.866] plot(\x, {-sqrt(1-pow(\x,2))+1});
\end{tikzpicture}
\caption{Part of a toric section $T=C_1\cup C_2$ with points of
self-intersection at source and detector points $\vs$ and $\vd$
respectively. The incoming photons (illustrated by wavy lines) have
initial energy $E$ and scatter at a fixed angle $\omega<\pi/2$ along
scattering sites $\vx\in T$. The resulting (scattered) photon energy
is $E'$ as in equation \eqref{equ1}. The electron density $f$ (the red
rectangle) is supported within the green circle (the unit ball, see
figure \ref{fig1}).} \label{fig0}
\end{figure}
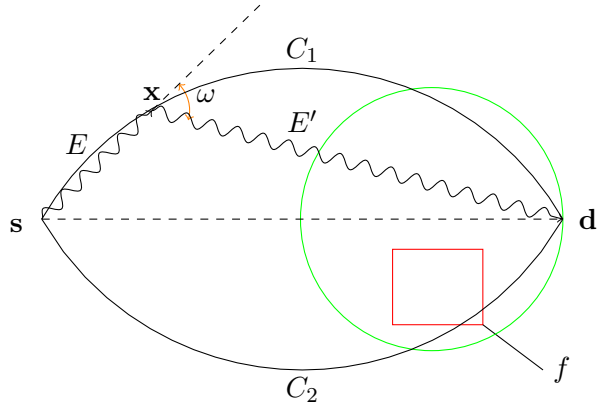where $E'$ is the scattered energy, $E$ is 
the initial energy, $\omega$ is the scattering angle and $E_0$ denotes
the electron rest energy. If the source is monochromatic ($E$ is
fixed) and we can measure the scattered energy $E'$, then the
scattering angle $\omega$ of the interaction is determined by equation
(\ref{equ1}). This implies that the locus of Compton scatterers in the
plane is a toric section $T=C_1\cup C_2$ (the union of
two intersecting circular arcs). See figure
\ref{fig0}. Hence we model the scattered intensity collected at the
detector $\vd$ with scattering angle $\omega$ (determined by the
scattering energy $E'$ in \eqref{equ1} and which determines the radius
$r$ of the circular arcs in figure \ref{fig1}) as integrals of the
electron charge density $f$ (represented by a real valued function)
over toric sections $T$. This is the idea behind two dimensional
Compton scattering tomography \cite{NT,norton,pal,me}.  Note that the
larger circular arcs of figure \ref{fig1} (which make up the majority
of the circle circumference) do not intersect the scanning region, and
hence we can consider integrals over whole toric sections (not just
the part of $T$ depicted in figure \ref{fig0}). In three dimensions,
the surface of scatterers is described by the surface of revolution of
a toric section about its central axis, namely a spindle torus. In
\cite{webberholman, me2} the inversion and microlocal aspects of a
spindle torus integral transform are considered.  In \cite{Rigaud2017}
Rigaud considers a related Compton model with attenuation, and Rigaud
and Hahn develop and analyze a clever contour reconstruction method
for a 3-d model \cite{RigaudHahn2018}.

The set of toric sections whose tips (the points of intersection of
$C_1$ and $C_2$) lie on two circles (as in figure \ref{fig1}) is three
dimensional. Indeed we can vary a source and detector coordinate on
$S^1\times S^1$ and the radius of the circles $r$. In this paper we
consider the two dimensional subset of toric sections whose central
axis (the line through the points of intersection of $C_1$ and $C_2$)
intersects the origin. This can be parametrized by a rotation about
the origin ($\theta\in S^1$) and the radius $r\geq 2$, as we shall see
later in section \ref{tsec}.

In \cite{me} the RTT geometry is considered and the scattered
intensity is approximated as a set of integrals over discs whose
boundaries intersect a given source point, and inversion techniques
and stability estimates are derived through an equivalence with the
Radon transform. Here we present a novel toric section transform
(which describes the scattered intensity exactly) and analyse its
stability from a microlocal standpoint. So far the results of Natterer
\cite{natterer} have been used to derive Sobolev space estimates for
the disc transform presented in \cite{me}, but the microlocal aspects
of the RTT geometry in Compton tomography are less well-studied. We
aim to address this here. We explain the expected artefacts in a
reconstruction from toric section integral data through an analysis of
the canonical relation of a toric section transform, and injectivity
results are provided for $L^\infty$ functions inside the unit ball.
The expected artefacts are shown by simulations and are as predicted
by the theory. We also give reconstructions of two simulated test
phantoms with varying levels of added pseudo-random noise. In
\cite{me2} it is suggested to use a Total Variation (TV)
regularization technique to combat the artefacts in a three
dimensional Compton tomography problem. Here we show that we can
combat the non-local artefacts (due to the 2-1 nature of the canonical relation) present in the reconstruction effectively in two
dimensions using a discrete approach and a heuristic TV regularizer.
\begin{figure}[!h]
\centering
\begin{tikzpicture}[scale=4.0]
\node at (0,0.87) {$\vs$};
\node at (0.05,-0.9) {$\vd$};
\path ({-cos(30)},0.5) coordinate (S);
\path (0, -0.8) coordinate (D);
\path  (0.6,0) coordinate (w);
\path (0,0) coordinate (a);
\draw pic[draw=orange, <->,"$\omega$", angle eccentricity=1.35] {angle = a--w--D};
\draw [rotate= 90] (0,-0.6) circle [radius=1];
\draw  [rotate=90] (0,0.6) circle [radius=1];
\draw   [->,line width=1pt] (0,-0.4)--(0,0);
\draw    [rotate=90] [<-] (0,-0.6)--(0,0);
\node   at   (-0.05,0.05) {$\theta$};
\node  at  (0.3,0.04) {$s$};
\node at (0.25,-0.35) {$r$};
\draw [->,line width=1pt] (-1,-0.4)--(1,-0.4);
\draw [->,line width=1pt] (0,-1.4)--(0,0.6);
\draw [->] (0,0)--(-0.6,0);
\node at (-0.1,0.6) {$x_2$};
\node at (1,-0.5) {$x_1$};
\draw [green] (0,-0.4) circle [radius=0.4];
\draw [blue] (0,-0.4) circle [radius=1.2];
\draw [red] (-0.25,-0.6) rectangle (0.05,-0.3);
\draw (-0.4,-0.8)node[left]{$f$}->(-0.25,-0.6);
\node at (-0.7,0) {$\vc_1$};
\node at (0.7,0) {$\vc_2$};
\draw (0.6,0)--(0,-0.8);
\node at (-0.43,0.43) {$C_2$};
\node at (0.4,0.43) {$C_1$};
\end{tikzpicture}
\caption{Part of a toric section $T=C_1\cup C_2$ with axis of rotation
$\theta=(0,1)$, tube center offset $s=\sqrt{r^2-4}$ and tube radius
$r$. Here $\cos\omega=\frac{\sqrt{r^2-4}}{r}$. The coordinates $\vc_1$ and $\vc_2$ denote the centers of the circles which the arcs
  $C_1$ and $C_2$ lie on respectively.  The detector ring
  (green circle, radius 1, center $O$) is the scanning region, where
  the density $f$ (the red square) is supported. The source ring is the blue circle, which has radius 3 and center $O$.} \label{fig1}
\end{figure}
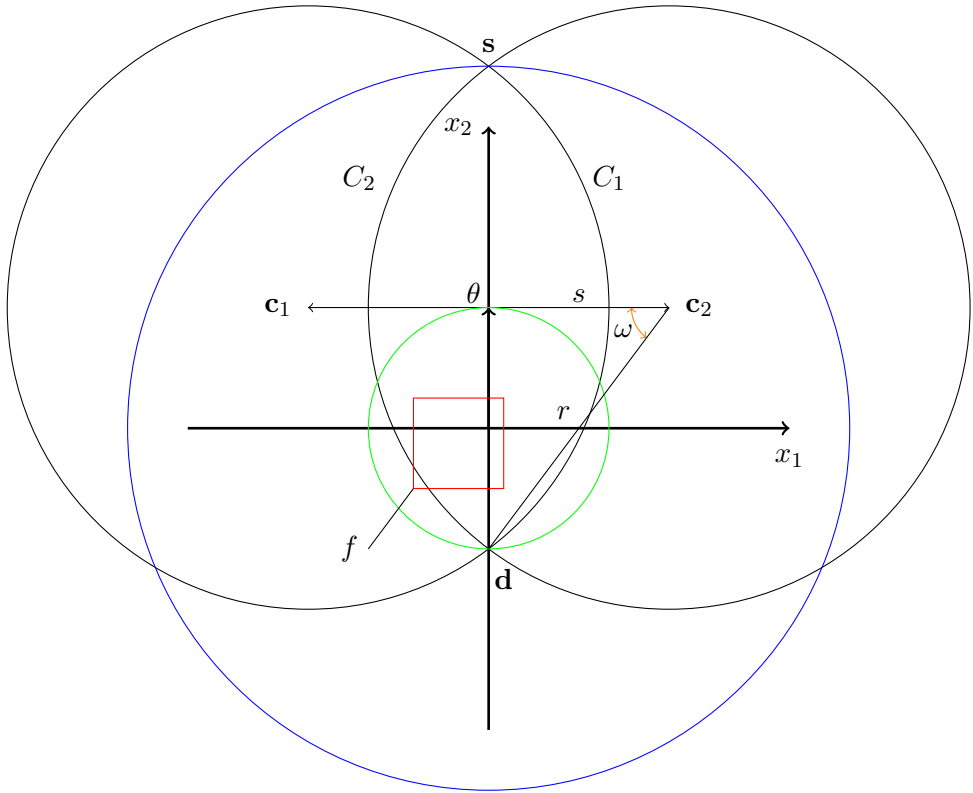
In section \ref{microsec} we recall some definitions and results on
Fourier Integral Operators (FIO's) and microlocal analysis before
introducing a new toric section transform in section \ref{tsec},
which describes the Compton scattered intensity collected by the
acquisition geometry in figure \ref{fig1}. Later in section \ref{tsec1}
we provide a novel microlocal analysis of the toric section transform
when considered as an FIO. Through an analysis of the canonical
relations of two circle transforms separately (whose sum is equivalent
to the toric section transform), we show that the canonical relation
of the toric section transform is 2--1 and provide explicit
expressions for the artefacts expected in a reconstruction from toric
section integral data.

In section \ref{tsec2} we prove the injectivity of the toric section
transform on the set of $L^\infty$ functions in the unit ball. This
uses a similar parameterization of circular arcs to Nguyen and Truong
in \cite{NT} and proves the injectivity by a decomposition into the
Fourier series components and using the ideas of Cormack
\cite{cormack}.

In section \ref{res}, we present a practical reconstruction algorithm for
the recovery of two dimensional densities from toric section integral data
and provide simulated reconstructions of two test phantoms (one simple and
one complex) with varying level of added pseudo-random noise. Here we use a
discrete approach. That is we discretize the toric section integral
operator (stored as a sparse matrix) on a pixel grid (assuming a piecewise
constant density) and use an iterative technique (e.g. a conjugate gradient
method) to solve the sparse set of linear equations described by the
discretized operator with regularization (e.g. Tikhonov or total
variation). We demonstrate the non-local artefacts in the reconstruction by an
application of the discretized normal operator ($A^TA$, where $A$ is the
discrete from of the toric section transform) to a delta function, and show
that the artefacts are exactly as predicted by the theory presented in
section \ref{tsec1} by a side by side comparison. We further show that we
can effectively combat the non-local reconstruction artefacts by applying the ``IRhtv" method of
\cite{IRtools} (see also \cite{AIRtools}).

\section{Microlocal definitions}\label{microsec}
We now provide  some definitions.
\begin{definition}[{\cite[Definition 7.1.1]{hormanderI}}]
For a function $f$ in the Schwartz space $S(\mathbb{R}^n)$ we define
the Fourier transform and its inverse 
as
\begin{equation}
\begin{split}
\mathcal{F}f(\xi)&=\int_{\mathbb{R}^n}e^{-ix\cdot\xi}f(x)\mathrm{d}x,\\
\mathcal{F}^{-1}f(x)&=(2\pi)^{-n}\int_{\mathbb{R}^n}e^{ix\cdot\xi}f(\xi)\mathrm{d}\xi.
\end{split}
\end{equation}
\end{definition}

We use the standard multi-index notation; let
$\alpha=(\alpha_1,\alpha_2,\dots,\alpha_n)\in \sparen{0,1,2,\dots}^n$
be a multi-index and $f$ a function on $\rn$, then $\partial^\alpha
f=\paren{\frac{\partial}{\partial
x_1}}^{\alpha_1}\paren{\frac{\partial}{\partial
x_2}}^{\alpha_2}\cdots\paren{\frac{\partial}{\partial x_n}}^{\alpha_n}
f$.

We identify cotangent spaces on Euclidean spaces with the underlying
Euclidean spaces so if $X$ is an open subset of $\rn$ and $(x,\xi)\in
X\times \rr^N$ then $T^*_{(x,\xi)}\paren{X\times \rr^N}$ is identified
with $\rn\times \rr^N$.  Under this identification, if
$\phi=\phi(x,\xi)$ for $(x,\xi)\in X\times \rr^N$ then
\[\begin{gathered}\dd_x \phi = \paren{\partxf{1}{\phi},
\partxf{2}{\phi}, \cdots, \partxf{n}{\phi} },\ \dd_\xi \phi =
\paren{\partxif{1}{\phi},\partxif{2}{\phi}, \cdots, \partxif{N}{\phi}
}\\ \text{ and }\ \dd\phi(x,\xi) = \paren{\dd_x \phi(x,\xi),\dd_\xi
\phi(x,\xi)}\in \rn\times\rr^N.\end{gathered}\]

\begin{definition}[{\cite[Definition 7.8.1]{hormanderI}}]
Let $X$ be an open subset of $\rn$ and let $m \in\mathbb{R}$. Then we define
$S^m(X\times\mathbb{R}^N)$ to
be the set of $a\in C^{\infty}(X\times \mathbb{R}^N)$ such that for
every compact set $K\subset X$ and all multi--indices $\alpha, \beta$
the bound
\[
\left|\partial^{\beta}_x\partial^{\alpha}_{\xi}a(x,\xi)\right|\leq
C_{\alpha,\beta,K}(1+|\xi|)^{m-|\alpha|},\ \ \ x\in K,\ \xi\in\mathbb{R}^n,
\]
holds for some constant $C_{K}$. The elements of $S^m$ are called
\emph{symbols} of order $m$.
\end{definition}

Note that these symbols are sometimes denoted $S^m_{1,0}$

\begin{definition}[{\cite[Definition
        21.2.15]{hormanderIII}}] \label{phasedef}
A function $\phi=\phi(x,\xi)\in
C^{\infty}(X\times\mathbb{R}^N\backslash 0)$ is a \emph{phase
function} if $\phi(x,\lambda\xi)=\lambda\phi(x,\xi)$, $\forall
\lambda>0$ and $\mathrm{d}\phi$ is nowhere zero. A phase function is
\emph{clean} if the critical set $\Sigma_\phi = \{ (x,\xi) \ : \
\mathrm{d}_\xi \phi(x,\xi) = 0 \}$ is a smooth manifold with tangent
space defined by $\mathrm{d} \paren{\mathrm{d}_\xi \phi}= 0$.
\end{definition}
\noindent By the implicit function theorem the requirement for a phase
function to be clean is satisfied if $\mathrm{d}\paren{\mathrm{d}_\xi
\phi}$ has constant rank.

\begin{definition}[{\cite[Definition 21.2.15]{hormanderIII} and
      \cite[Section 25.2]{hormander}}]\label{def:canon}
Let $X \subset \mathbb{R}^{n_x}$, $Y \subset \mathbb{R}^{n_y}$ be open
sets.  Let $\phi\in C^\infty\paren{X \times Y \times
\paren{\mathbb{R}^N\setminus 0}}$ be a clean phase function.  Then,
the \emph{critical set of $\phi$} is
\[\Sigma_\phi=\{(x,y,\xi)\in X\times Y\times\mathbb{R}^N\smo
: \dd_{\xi}\phi=0\}.\] The \emph{canonical relation parametrized by
$\phi$} is defined as
\bel{def:Cgenl}
 \begin{aligned}
 \Cc=&\sparen{
\paren{(y,\dd_y\phi(x,y,\xi)),(x,-\dd_x\phi(x,y,\xi))}:(x,y,\xi)\in
\Sigma_{\phi}},
\end{aligned}
\end{equation}
\end{definition}

\begin{definition}\label{FIOdef}
Let $X \subset \mathbb{R}^{n_x}$, $Y \subset \mathbb{R}^{n_y}$ be open
sets. A Fourier integral operator (FIO) of order $m + N/2 - (n_x +
n_y)/4$ is an operator $A:C^{\infty}_0(X)\to \mathcal{D}'(Y)$ with
Schwartz kernel given  by an oscillatory integral of the form
\begin{equation} \label{oscint}
K_A(x,y)=\int_{\mathbb{R}^N} e^{i\phi(x,y,\xi)}a(x,y,\xi) \mathrm{d}\xi,
\end{equation}
where $\phi$ is a clean phase function and $a \in S^m(X \times Y
\times \mathbb{R}^N)$ a symbol. The canonical relation of $A$ is the
canonical relation of $\phi$ defined in \eqref{def:Cgenl}.
\end{definition}

This is a simplified version of the definition of FIO in \cite[Section
2.4]{duist} or \cite[Section 25.2]{hormander} that is suitable for our
purposes since our phase functions are global.  For general
information about FIOs see \cite{duist, hormanderIII, hormander}.

\begin{definition}
\label{defproj} Let $\Cc\in T^*(Y\times X)$ be the canonical relation
associated to the FIO $A:\mathcal{E}'(X)\to \mathcal{D}'(Y)$. Then we
denote $\pi_L$ and $\pi_R$ to be the natural left- and
right-projections of $\Cc$, $\pi_L:\Cc\to T^*Y\backslash 0$ and $\pi_R
: \Cc\to T^*X\backslash 0$.
\end{definition}

 We have the following result from \cite{hormander}.
\begin{proposition}
\label{prop1}
Let $\dim(X)=\dim(Y)$. Then at any point in $\Cc$:
\begin{enumerate}[label=(\roman*)]
\item if one of $\pi_L$ or $\pi_R$ is a local diffeomorphism, then
$\Cc$ is a local canonical graph; 

\item if one of the projections $\pi_R$ or $\pi_L$ is singular at a
point in $\Cc$, then so is the other. The type of the singularity may
be different but both projections drop rank on the same set
\begin{equation}
\Sigma=\{(y,\eta; x,\xi)\in \Cc :
\det(\mathrm{d}\pi_L)=0\}=\{(y,\eta; x,\xi)\in \Cc : \det
(\mathrm{d}\pi_R)=0\}.
\end{equation}
\end{enumerate}
\end{proposition}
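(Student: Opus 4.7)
The plan is to view $\Cc$ as a Lagrangian submanifold of $T^*Y\times T^*X$ with respect to the twisted symplectic form $\omega=\omega_Y-\omega_X$, where $\omega_X$ and $\omega_Y$ are the canonical symplectic forms on the respective cotangent bundles. This is a standard consequence of the parametrization \eqref{def:Cgenl} by a clean phase function $\phi$; the sign twist is produced by the ``$-\dd_x\phi$'' convention in the definition. Fix a point $p=(y,\eta;x,\xi)\in\Cc$ and set $V=T_p\Cc$; then $\dim V=2n$ (with $n=\dim X=\dim Y$) and $V=V^\omega$ inside the ambient $T_p(T^*Y\times T^*X)$.

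Next I would identify the kernels of the projections with intersections of $V$ and two complementary coordinate subspaces. Writing $U=\{0\}\oplus T_{(x,\xi)}T^*X$ and $U'=T_{(y,\eta)}T^*Y\oplus\{0\}$, one has $\ker(\dd\pi_L)_p=V\cap U$ and $\ker(\dd\pi_R)_p=V\cap U'$. From the explicit formula $\omega\bigl((u_1,u_2),(v_1,v_2)\bigr)=\omega_Y(u_1,v_1)-\omega_X(u_2,v_2)$ and non-degeneracy of $\omega_X$, a direct check gives the key identity $U^\omega=U'$.

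The main step is then to invoke the standard identity $(A\cap B)^\omega=A^\omega+B^\omega$, valid in any symplectic vector space, with $A=V$ and $B=U$. Combined with $V=V^\omega$ this yields $(V\cap U)^\omega=V+U'$, and counting dimensions gives
\[
4n-\dim(V\cap U)=\dim V+\dim U'-\dim(V\cap U')=4n-\dim(V\cap U'),
\]
so $\ker(\dd\pi_L)_p$ and $\ker(\dd\pi_R)_p$ have equal dimension. Because $\dim\Cc=\dim T^*X=\dim T^*Y=2n$, this equality is exactly the coincidence of the ranks of $\dd\pi_L$ and $\dd\pi_R$ at $p$, proving (ii) and showing that the two singular loci form a single set $\Sigma$.

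For (i), if $\pi_L$ is a local diffeomorphism at $p$ then $V\cap U=0$, hence $V\cap U'=0$ by the same dimension count, so $\pi_R$ is a local diffeomorphism too. On a neighborhood of $p$, $\Cc$ is then the graph of $\chi=\pi_L\circ\pi_R^{-1}\colon T^*X\to T^*Y$, and the Lagrangian relation $\pi_L^*\omega_Y=\pi_R^*\omega_X$ on $\Cc$ translates to $\chi^*\omega_Y=\omega_X$; hence $\chi$ is a local symplectomorphism and $\Cc$ is a local canonical graph. The only subtle ingredients are the sign-twist computation $U^\omega=U'$ and the initial verification that $\Cc$ is Lagrangian for $\omega$; after these, the proposition reduces to symplectic linear algebra on $T_p\Cc$.
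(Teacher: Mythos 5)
Your argument is correct and complete. Note, however, that the paper itself gives no proof of this proposition: it is quoted as a known result from H\"ormander (see also Treves and Guillemin--Sternberg), so there is nothing in the text to compare against line by line. What you have written is precisely the standard symplectic linear algebra proof that underlies that citation: $\Cc$ is Lagrangian for the twisted form $\omega_Y-\omega_X$, the kernels of $\dd\pi_L$ and $\dd\pi_R$ at $p$ are $V\cap U$ and $V\cap U'$ with $U^\omega=U'$, and the identity $(V\cap U)^\omega=V^\omega+U^\omega=V+U'$ together with $\dim V=\dim U'=2n$ forces the two kernels to have equal dimension, hence equal rank loci; part (i) then follows from the inverse function theorem plus the fact that a Lagrangian graph is the graph of a symplectomorphism. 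All the individual steps you flag as the ``subtle ingredients'' check out: the sign twist in \eqref{def:Cgenl} is exactly what makes $\Cc$ isotropic for $\omega_Y-\omega_X$ rather than $\omega_Y+\omega_X$, and cleanness of the phase guarantees $\Cc$ is a smooth $2n$-dimensional manifold so that $V=V^\omega$ by the dimension count. This is a legitimate self-contained replacement for the citation.
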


If a FIO $\Fc$ satisfies our next definition and $\Fc^t$ is its formal
adjoint, then $\Fc^t \Fc$ (or $\Fc^t \phi \Fc$ where $\phi\in \Dc(Y)$
if $\Fc$ and $\Fc^t$ cannot be composed) is a pseudodifferential
operator \cite{GS1977, quinto}.

\begin{definition}[Semi-global Bolker Assumption]\label{def:bolker} Let
$\Fc:\Ec'(X)\to \Dc'(Y)$ be a FIO with canonical relation $\Cc$ then
$\Fc$ (or $\Cc$) satisfies the \emph{semi-global Bolker Assumption} if the
natural projection $\pi_Y:\Cc\to T^*(Y)$ is an injective
immersion.\end{definition}




\section{A toric section transform} \label{tsec} In this section we
recall some notation and definitions and introduce a toric section
transform which models the intensity of scattered radiation described
by the acquisition geometry in figure \ref{fig1}. This section
contains our main theoretical results.  We describe microlocally the
expected artefacts in any \emph{backprojection} reconstruction
from toric section integral data (Theorem \ref{thm:main} and Remarks
\ref{rem:strength} and \ref{rem:what it means}).  In addition, we
prove the injectivity of the toric section transform using integral
equations techniques (Theorem \ref{thm:IntegralEqun} and Remark
\ref{rem:Cormack}).

For $r>0$, let $B_r$ be the open disk centered at the origin of radius
$r$ and let $B=B_1$ denote the open unit disk.  For $X$ an open subset
of $\mathbb{R}^n$, let $\mathcal{D}'(X)$ denote the vector space of
distributions on $X$, and let $\mathcal{E}'(X)$ denote the vector
space of distributions with compact support contained in $X$.

Let us parametrize points on the unit circle, $\theta\in S^1$ as
$\theta=\theta(\alpha)=(\cos\alpha,\sin\alpha)$, for $\alpha\in \otp$,
and let $\theta_{\alpha}=\frac{d\theta}{d\alpha}$ be the unit vector
$\pi/2$ radians counterclockwise (CCW) from $\theta$.  When the
choice of $\alpha$ is understood, then we will write $\theta$ for
$\theta(\alpha)$.

Let $(r,\alpha)\in Y:=(2,\infty)\times \otp$.  To define the toric
section, we first define two circular arcs and their centers.  For
$(r,\alpha)\in Y$ define \bel{defCs}
\begin{gathered}
s=\sqrt{r^2-4},
\quad
\vc_1=\vc_1(r,\alpha)=\theta(\alpha)+s\theta_{\alpha}(\alpha),
\quad  
\vc_2=\vc_2(r,\alpha)
=\theta(\alpha)-s\theta_{\alpha}(\alpha)
\\
C_1=C_1(r,\alpha)=\{\vx\in\mathbb{R}^2 :
\vx\cdot \theta_\alpha \leq0,\ |\vx-\vc_1(r,\alpha)|^2-r^2=0\},\notag\\
C_2=C_2(r,\alpha)=\{\vy\in\mathbb{R}^2 : \vy\cdot\theta_\alpha
\geq0,\
|\vy-\vc_2(r,\alpha)|^2-r^2=0\}.
\end{gathered}\ee
When the choice of $(r,\alpha)$ is understood, we will refer to the
arcs as $C_j$ and their centers as $\vc_j$ for $j\in \sparen{1,2}$.




The toric transform integrates functions on $B$ over the \emph{toric
sections}, $C_1(r,\alpha)\cup C_2(r,\alpha)$: let $f\in
C^{\infty}_0(B)$ represent the charge density in the plane.  Then, we
define the \emph{circle transforms} \bel{def:Tj} \mathcal{T}_1
f(r,\alpha)=\int_{C_1}f\mathrm{d}s,\ \ \ \ \ \ \ \
\mathcal{T}_2f(r,\alpha)=\int_{C_2}f\mathrm{d}s.
\end{equation}
and the \emph{toric section transform}
\bel{def:T}
\mathcal{T}f(r,\alpha)=\int_{C_1\cup
C_2}f\mathrm{d}s=\Tc_1(f)(r,\alpha)+\Tc_2(f)(r,\alpha)
\ee
where $\mathrm{d}s$ denotes the arc element on a circle.

 \begin{remark}\label{rem:T*} Let $j=1,2$.  The adjoint, $\Tc^t_j$, of
$\Tc_j$ is defined on distributions by duality.  For $g\in \Dc(Y)$ and
$\vx\in \rtwo\smo$, $\Tc^t g(\vx)$ is a weighted integral of $g$ over
all toric sections through $\vx$. Since there are no toric sections
intersecting points outside of $B_3$, we assume $\vx\in B_3$.  We also
note that no toric sections go through $\zero$--toric sections close
to $\zero$ have values of $r\approx \infty$.

Furthermore, for fixed $\vx\in B_3\setminus B$, the values of $\alpha$
such that $\vx\in C_j(r,\alpha)$ (for some $r$) is a proper
subinterval of $[0,2\pi]$.  

Since the set of toric sections is unbounded, $\Tc^t$ must be defined
on distributions of compact support.  

To deal with all of these inconveniences, we define a modified
adjoint.  Let $\vp:(2,\infty)\to\rr$ be smooth and with compact
support in $(2,M)$ for some $M>2$.  One can also assume $0\leq \vp\leq
1$ and $\vp=1$ on most of $(2,M)$.  We define the
\emph{cutoff-adjoint} $\Tc^*_j:\Dc'(Y)\to\Dc'(B_3)$. For $g\in
\Dc'(Y)$, \bel{def:T*} \Tc^*_j g=\Tc^t_j(\vp g),\qquad \Tc^* =
\Tc_1^*+\Tc_2^*.\ee 

Let $\rho_{\min}=M-\sqrt{M^2-3}$. Then, $\Tc^* g(\vx)=0$ for $\vx\in
B_{\rho_{\min}}\smo$. This is true because $\rho_{\min}$ is the
closest distance of the arcs $C_1(r,\alpha)$ and $C_2(r,\alpha)$ get
to the origin for all $(r,\alpha)\in (2,M)\times \otp$.  Therefore, we
define $\Tc^* g(0)=0$ and $\Tc^*g$ is smooth near $0$.  This also
means for $f\in \Ec'(B_3)$ that $\Tc^*\Tc f (\vx)=0$ if $\vx\in
B_{\rho_{\min}}$.
\end{remark}

In this section, we will study the microlocal properties
of $\Tc^*\Tc$. In Remark \ref{rem:strength}, we generalize our results
to a more general filtered backprojection. The main results of this
section are as follows.  Let $f\in \mathcal{E}'(B)$ have a singularity
(e.g., region boundary) at $\vw\in B$ in direction $\xi\in \rn\smo$,
with $\vw\cdot\xi\neq 0$, and let $\xi'=\xi/|\xi|$.  Our main theorem
(Theorem \ref{thm:main}) proves the existence of image artefacts
corresponding to $(\vw,\xi)$ in a reconstruction from $\Tc f$ data at
two points $\vx, \vy\in\mathbb{R}^2$. The expression for $\vy$ is
given explicitly by
\begin{equation}
    \vy=\nu[\theta_{\alpha},\theta]\begin{bmatrix}
    -\theta_{\alpha}^T \\
    \frac{2}{s}\theta_{\alpha}^T-\theta^T
    \end{bmatrix}\vw,
\end{equation}
where $r=\frac{|\vw|^2+3}{2(\vw\cdot\xi')}$ and $\theta$ satisfies
$$
\begin{pmatrix}
1 & s\\
-s & 1
\end{pmatrix}\theta=\vw-r\xi',
$$
and $\nu>0$ is chosen so that $\vy\in C_2$.  The artefact at
$\vy$ comes about when the singularity at $(\vw,\xi)$ is (co)normal to
a $C_1$ arc and is detected by $\Tc_1$ but backprojected by
$\Tc_2^*$.

The expression for $\vx$ is
given by
$$
    \vx=\frac{1}{\nu}[\theta_{\alpha},\theta]\begin{bmatrix}
    -\theta_{\alpha}^T \\
    -\frac{2}{s}\theta_{\alpha}^T-\theta^T
    \end{bmatrix}\vw,
$$
where $r=\frac{|\vw|^2+3}{2(\vw\cdot\xi')}$ and  $\theta$ satisfies
$$\begin{pmatrix}
1 & -s\\
s & 1
\end{pmatrix}\theta=\vw-r\xi',$$
and $\nu>0$ is chosen so that $\vx\in C_1$.  The artefact at
$\vx$ comes about when the singularity at $(\vw,\xi)$ is (co)normal to
a $C_2$ arc and is detected by $\Tc_2$ but backprojected by
$\Tc_1^*$.

 A visualization of the predicted image artefacts when $f$ is a delta
distribution is given in figure \ref{fig8}.

\subsection{Microlocal Properties of $\Tc_j$ and $\Tc$}\label{tsec1}

Since we do not consider the points of intersection of the arcs $C_1$
and $C_2$ (since distributions in the domain of $\Tc$, $\Ec'(B)$, are
supported away from them), we can consider the microlocal properties
of the circle transforms $\mathcal{T}_1$ and $\mathcal{T}_2$
separately.   Let $Y=\otp\times
(2,\infty)$.  When considering functions and distributions on $Y$, we
use the standard identification of $\otp$ with the unit circle $S^1$,
$\alpha\mapsto \theta(\alpha)=(\cos(\alpha),\sin(\alpha))$. 

We first show $\Tc_1$ and $\Tc_2$ are FIO.

\begin{proposition}\label{prop:FIO} $\Tc_1$ and $\Tc_2$ are both FIO
of order $-1/2$.  Their canonical relations are 
\begin{equation}\label{def:C1C2}
\begin{aligned}
\Cc_1=\Big\{\big(&r,\alpha,-2\sigma
\vx\cdot(\theta_{\alpha}-s\theta),-\frac{2\sigma
r}{s}(\vx\cdot\theta_{\alpha});
\vx,-2\sigma(\vx-\vc_1(r,\alpha))\big) : \\
&\quad (r,\alpha)\in Y, \sigma\in
\mathbb{R}\smo, \vx\in C_1(r,\alpha)\cap B\Big\}.\\
\Cc_2=\Big\{\big(&\alpha,r,-2\sigma
\vy\cdot(\theta_{\alpha}+s\theta),\frac{2\sigma
r}{s}(\vy\cdot\theta_{\alpha});
\vy,-2\sigma(\vy-\vc_2(r,\alpha))\big) : \\
&\quad (r,\alpha)\in Y, \sigma\in \mathbb{R}\backslash\{0\},\vy\in
C_2(r,\alpha)\cap B,
\Big\}.
\end{aligned}
\end{equation}
\end{proposition}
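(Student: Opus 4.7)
The plan is to realize each $\Tc_j$ as an FIO by writing its Schwartz kernel as an oscillatory integral and then reading off the canonical relation directly from the phase function. For $\Tc_1$, I would present the arc-length measure on the half-arc $C_1(r,\alpha)$ via a $\delta$-distribution,
\[
\Tc_1 f(r,\alpha) = 2r\int_{\rtwo}\chi_1(\vx,\alpha)\,f(\vx)\,\delta\paren{|\vx-\vc_1(r,\alpha)|^2-r^2}\,d\vx,
\]
where the Jacobian factor $2r = |\nabla_\vx(|\vx-\vc_1|^2-r^2)|$ is constant on the circle $\sparen{|\vx-\vc_1|=r}$ and $\chi_1$ is a smooth cutoff equal to $1$ on a neighborhood of $\sparen{\vx\cdot\theta_\alpha<0}\cap B$ and vanishing near $C_2(r,\alpha)\cap\overline{B}$. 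Inserting the Fourier representation $\delta(t) = (2\pi)^{-1}\int_\rr e^{it\sigma}\,d\sigma$ rewrites the kernel as the oscillatory integral
\[
\Tc_1 f(r,\alpha) = \int_{\rtwo}\int_\rr e^{i\phi_1(\vx,r,\alpha,\sigma)}\,a_1(\vx,r,\alpha)\,f(\vx)\,d\sigma\,d\vx,
\]
with phase function $\phi_1(\vx,r,\alpha,\sigma) = \sigma\paren{|\vx-\vc_1(r,\alpha)|^2-r^2}$ and amplitude $a_1 = (r/\pi)\chi_1\in S^0$. The construction for $\Tc_2$ is identical, with $\vc_2 = \theta-s\theta_\alpha$ replacing $\vc_1$ and a complementary cutoff $\chi_2$.

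Next I would verify that $\phi_1$ is a clean phase function in the sense of Definition \ref{phasedef}. Homogeneity of degree $1$ in $\sigma$ is immediate; nondegeneracy holds on the critical set $\Sigma_{\phi_1} = \sparen{|\vx-\vc_1|^2 = r^2}$ because $\dd_\vx\phi_1 = 2\sigma(\vx-\vc_1)$ is nonzero whenever $\sigma\neq 0$ and $\vx$ lies on the circle; and cleanliness follows because $\dd(\dd_\sigma\phi_1) = \dd(|\vx-\vc_1|^2-r^2)$ has constant rank $1$ on $\Sigma_{\phi_1}$. The order then follows from Definition \ref{FIOdef}: with $N=1$ phase variable, $n_x = n_y = 2$, and amplitude order $m=0$, the order of $\Tc_1$ is $0 + 1/2 - (2+2)/4 = -1/2$.

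Finally, I would compute the canonical relation from Definition \ref{def:canon}. Using $\partial_r \vc_1 = (r/s)\theta_\alpha$, $\partial_\alpha\vc_1 = \theta_\alpha - s\theta$, together with the identities $\theta\cdot\theta_\alpha = 0$, $|\theta|=|\theta_\alpha|=1$, and $\vc_1\cdot\theta_\alpha = s$, short simplifications give
\[
\dd_r\phi_1 = -\tfrac{2\sigma r}{s}\,\vx\cdot\theta_\alpha,\qquad \dd_\alpha\phi_1 = -2\sigma\,\vx\cdot(\theta_\alpha - s\theta),\qquad -\dd_\vx\phi_1 = -2\sigma(\vx-\vc_1),
\]
which, combined with the critical-set condition $\dd_\sigma\phi_1 = 0$ forcing $\vx\in C_1(r,\alpha)\cap B$, recover exactly the expression for $\Cc_1$ in \eqref{def:C1C2}. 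The computation for $\Cc_2$ is parallel, with the sign of $s\theta_\alpha$ reversed in $\vc_2$ producing the analogous sign changes in the dual coordinates.

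The main technical concern will be justifying the smooth half-arc cutoffs $\chi_j$. These are well-defined because the two arcs of a toric section meet only at the points $3\theta(\alpha)$ and $-\theta(\alpha)$, both of which lie outside the open unit disk $B$ where distributions in $\Ec'(B)$ are supported. Hence $\chi_1$ and $\chi_2$ can be chosen to smoothly separate $C_1$ from $C_2$ on a neighborhood of $\supp f$, and the resulting oscillatory integrals correctly represent $\Tc_j$ on $\Ec'(B)$ without affecting the FIO structure or the canonical relation.
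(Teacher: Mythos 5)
Your proposal is correct and follows essentially the same route as the paper: both realize $\Tc_j$ through the phase $\phi_j=\sigma\paren{|\vx-\vc_j(r,\alpha)|^2-r^2}$ arising from the $\delta$-representation of arc-length measure, read off \eqref{def:C1C2} from Definition \ref{def:canon}, and get the order from $0+1/2-(2+2)/4=-1/2$; your derivative computations check out. The one imprecision is in your justification of the cutoffs $\chi_j$: the point at issue is not separating $C_1$ from $C_2$ (arcs of two \emph{different} circles), but suppressing the complementary arc of the \emph{same} circle $\sparen{|\vx-\vc_1(r,\alpha)|=r}$, and as specified $\chi_1$ is not required to vanish there. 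This is harmless only because that complementary arc meets $\overline{B}$ solely at the tip $-\theta(\alpha)$, so for $f$ supported in $B$ the full-circle integral already equals the integral over $C_1$ (the observation the paper uses, which makes the cutoffs unnecessary); that fact, rather than the location of the tips alone, is what validates the oscillatory-integral representation and should be stated.
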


 For $j=1,2$, we let $\tCc_j$ be defined as $\Cc_j$ except that $\vx$
or $\vy$ is not restricted to be in $B$ and we let $\tCc=\tCc_1\cup
\tCc_2$.

\begin{proof}
  We briefly explain why $\Tc_2$ is a FIO and we calculate its canonical
  relation.  Let
  $Z=\sparen{(r,\alpha,\vy)\in Y\times B\st
    |\vy-\vc_2(r,\alpha)|^2-r^2=0}$.  From calculations in
  \cite{GS1977,quinto} the Schwartz kernel of $\Tc_2$ is integration over
  $Z$ and so the this Schwartz kernel is a Fourier integral distribution
  with phase function
  $\phi_2(\vy,r,\alpha,\sigma)=\sigma\left(|\vy-\vc_2(r,\alpha)|^2
    -r^2\right)$.  This is true because, for functions supported in $B$,
  $\Tc_2$ can be viewed as integrating on the full circle defined by
  $|\vy-\vc_2(r,\alpha)|^2-r^2=0$.

Using Definition \ref{def:canon} one sees that the canonical relation
of $\Tc_2$ is given by the expression in \eqref{def:C1C2}.  One can
easily check that the projections $\pi_L(\Cc_2)$ and $\pi_R(\Cc_2)$ do
not map to the zero section so $\Tc_2:\Ec'(B)\to \Dc'(Y)$
\cite{Ho1971}.   

The operator $\Tc_2$ is a Radon transform and therefore its symbol
  is of order zero (see, e.g., \cite{quinto}), so one can use the order
  calculation in Definition \ref{FIOdef} to show that the order of $\Tc_2$
  is $-1/2$.

In a similar way, one shows that  $\Tc_1$ is an FIO with phase function 
$\phi_1(\vx,r,\alpha,\sigma)=\sigma\left(|\vx-\vc_1(r,\alpha)|^2-r^2\right)$.\end{proof}

We now prove that each $\Tc_j$ satisfies the Bolker Assumption.

\begin{theorem}\label{thm:bolker}
For $j=1,2$, the left projection $\pi_L:\tCc_j\to T^*(Y)$ is an
injective immersion.  Therefore, $\pi_L:\Cc_j\to T^*(Y)$ is an
injective immersion and so $\Tc_j$ satisfies the semi-global Bolker
Assumption (Definition \ref{def:bolker}).

The operators $\Tc_i^*$ and $\Tc_j$ can be composed as FIO and
the compositions all have order $-1$.
\end{theorem}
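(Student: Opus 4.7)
The plan is to handle the two claims separately, focusing on $\Tc_2$ since the argument for $\Tc_1$ is formally identical after interchanging $\vc_1\leftrightarrow\vc_2$ and the sign of the conormal constraint. To analyse $\pi_L|_{\tCc_2}$ I parametrize points of $C_2(r,\alpha)$ by a single angle $\beta$, writing $\vy = \vc_2(r,\alpha) + r\bigl(\cos\beta\,\theta_\alpha + \sin\beta\,\theta\bigr)$. Then $\tCc_2$ is globally parametrized by $(r,\alpha,\sigma,\beta)$, and $\pi_L$ acts as the identity on the $(r,\alpha)$ factor, so it suffices to examine the $2\times 2$ block $(\sigma,\beta)\mapsto(\eta_r,\eta_\alpha)$.

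For the immersion I would compute this Jacobian directly. Using $\vc_2\cdot\theta_\alpha = -s$, $\vc_2\cdot\theta = 1$, and $\vc_2\cdot(\theta_\alpha+s\theta) = 0$, one obtains $\vy\cdot\theta_\alpha = r\cos\beta - s$ and $\vy\cdot(\theta_\alpha + s\theta) = r(\cos\beta + s\sin\beta)$, together with the analogous $\beta$-derivatives. A short computation then collapses the determinant to $-4\sigma r^2\bigl(r - s\cos\beta + \sin\beta\bigr)$. Since $\sigma\neq 0$ on $\tCc_2$ and $|s\cos\beta-\sin\beta|\leq\sqrt{s^2+1}=\sqrt{r^2-3}<r$, the second factor is strictly positive and the Jacobian never vanishes, so $\pi_L$ is a global immersion.

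For injectivity, fix an image point $(r,\alpha,\eta_r,\eta_\alpha)$; its first two coordinates determine $C_2$ and $\vc_2$. The two cotangent equations are linear in $\sigma\vy$ and, since $\{\theta_\alpha,\theta_\alpha + s\theta\}$ is a basis of $\rtwo$, they determine $\sigma\vy$ uniquely in terms of $(\eta_r,\eta_\alpha,r,s)$. Substituting $\vy=(\sigma\vy)/\sigma$ into $|\vy-\vc_2|^2 = r^2$, using $|\vc_2|^2 = r^2-3$ to rewrite this as $|\vy|^2 - 2\vy\cdot\vc_2 = 3$, and multiplying through by $\sigma^2$, produces a quadratic in $\sigma$ whose two roots have opposite signs. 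The sign is then pinned down by combining the arc constraint $\vy\cdot\theta_\alpha\geq 0$ with $\sigma(\vy\cdot\theta_\alpha)=s\eta_r/(2r)$, which forces $\mathrm{sign}(\sigma)=\mathrm{sign}(\eta_r)$ on the open arc, so $(\sigma,\vy)$ is uniquely recovered. Since $\Cc_2$ restricts $\vy$ to $C_2\cap B$, the tip points $\vy = -\theta,\, 3\theta$ (where $\vy\cdot\theta_\alpha=0$) lie outside $B$ and are excluded automatically, yielding the semi-global Bolker condition for $\Tc_2$.

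The composition claim then follows from the standard clean composition theorem for FIO. Each $\Tc_j$ has order $-1/2$ by Proposition \ref{prop:FIO}, and writing $\Tc_i^* = \Tc_i^t \circ M_\vp$ with $M_\vp$ a zero-order pseudodifferential operator shows that $\Tc_i^*$ also has order $-1/2$. The Bolker condition for $\Tc_j$ guarantees that $\Cc_i^{-1}$ and $\Cc_j$ compose cleanly \cite{hormander,GS1977}, so $\Tc_i^*\Tc_j$ is an FIO of order $-1/2+(-1/2)=-1$; when $i=j$ it collapses to a pseudodifferential operator, and when $i\neq j$ it generates the non-identity part of the canonical relation analysed in the following sections. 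The main obstacle I expect is the injectivity step near the arc endpoints where $\vy\cdot\theta_\alpha = 0$: at these points the sign-of-$\sigma$ argument degenerates, and although the endpoints are harmlessly excluded from $\Cc_2$ by the support condition on $B$, extending the statement cleanly to all of $\tCc_2$ requires an auxiliary continuity argument or the observation that the tips form a lower-dimensional set irrelevant to the microlocal conclusions.
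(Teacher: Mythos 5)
Your argument is correct and follows essentially the same route as the paper: the immersion computation reduces to the same determinant (your $-4\sigma r^2(r-s\cos\beta+\sin\beta)$ is the paper's $4r^3\sigma\paren{\tfrac{1}{r}\psi\cdot\vc_2+1}$ written in the rotated frame) and the same inequality $\sqrt{r^2-3}<r$, while your quadratic-in-$\sigma$ injectivity step is an algebraic restatement of the paper's observation that a ray from the origin meets $C_2$ at most once, and the composition claim is handled identically via canonical graphs and addition of orders. Your closing caveat about the tips is well taken --- the paper's own injectivity claim for $\tCc_2$ silently degenerates there as well --- but since the tips lie outside $B$ this does not affect the Bolker assumption for $\Cc_j$ or the conclusions that follow.
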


\begin{proof}
We will prove this theorem for $\Tc_2$ and the proof for $\Tc_1$ is
completely analogous.  We first show that $\pi_L$ is an immersion.  

As noted above, if $\alpha$ is known, then we let
$\theta=\theta(\alpha)$ and $\theta_\alpha=(-\sin\alpha,\cos\alpha)$.
For bookkeeping reasons, if $\beta\in \otp$, the vector in $S^1$
corresponding to $\beta$ will be denoted $\psi =
(\cos\beta,\sin\beta)$ and we let
$\psi_{\beta}=(-\sin\beta,\cos\beta)$ be the unit vector $\pi/2$
radians CCW from $\psi$.  This allows us to parametrize points on
$C_2(r,\alpha)$ by \bel{def:y}\vy=\vy(r,\alpha,\beta)=\vc_2
+r\psi=\vc_2(r,\alpha)+r(\cos\beta,\sin\beta),\ee for $\beta$ in an
open interval containing $\otp$.  Then, \bel{coords:C2}(r,\alpha,
\beta, \sigma)\mapsto
\lambda_2(r,\alpha,\beta,\sigma):=\paren{r,\alpha,\sigma \dd_\alpha
\phi_2,\sigma\dd_r \phi_2;\vy(r,\alpha,\beta),
-\sigma\dd_\vy\phi_2}\in \Cc_2\ee gives coordinates on the canonical
relation $\Cc_2$.  Using these coordinates and after simplification,
the map $\pi_L$ is given by
\begin{equation}
\pi_L(\lambda(r,\alpha,\beta,\sigma))=\left(r,\alpha, -2\sigma
r\psi\cdot (\theta_{\alpha}+s\theta), \frac{2\sigma
r}{s}(-s+r\psi\cdot \theta_{\alpha})\right)
\end{equation}
and
\begin{equation}
D\pi_L = 
 \begin{pmatrix}
  1 & 0 & 0&0\\
  0 & 1 & 0&0\\
  a_{3,1}  & a_{3,2}  & -2\sigma r\psi_{\beta}\cdot
  (\theta_{\alpha}+s\theta)  
& -2 r\psi\cdot (\theta_{\alpha}+s\theta) 
\\
  a_{4,1} & a_{4,2} 

& \frac{2\sigma r^2}{s}(\psi_{\beta}\cdot \theta_{\alpha})  

& \frac{2 r^2}{s}\left(-\frac{s}{r}+\psi\cdot \theta_{\alpha}\right) 
 \end{pmatrix}.
 \end{equation}
It follows that
\begin{equation}
\begin{split}
\det D\pi_L&=-\frac{4r^3\sigma}{s}\det\begin{pmatrix}
\psi_{\beta}\cdot (\theta_{\alpha}+s\theta) 
&  \psi\cdot (\theta_{\alpha}+s\theta) 
\\
\psi_{\beta}\cdot \theta_{\alpha}  
& -\frac{s}{r}+\psi\cdot \theta_{\alpha} 
 \end{pmatrix}\\
 &=-\frac{4r^3\sigma}{s}\left(-\frac{s}{r}\psi_{\beta}\cdot
(\theta_{\alpha}+s\theta)+(\psi\cdot \theta_{\alpha})(\psi_{\beta}\cdot
\theta_{\alpha}+s\psi_{\beta}\cdot\theta)-(\psi_{\beta}\cdot
\theta_{\alpha})(\psi\cdot \theta_{\alpha}+s\psi\cdot\theta)\right)\\
 &=-4r^3\sigma\left(-\frac{1}{r}\psi_{\beta}\cdot
(\theta_{\alpha}+s\theta)+(\psi\cdot
\theta_{\alpha})(\psi_{\beta}\cdot\theta)-(\psi_{\beta}\cdot
\theta_{\alpha})(\psi\cdot\theta)\right)\\
 &=4r^3\sigma\left(\frac{1}{r}(\psi\cdot \vc_2)+\left((\psi\cdot
\theta_{\alpha})^2+(\psi\cdot\theta)^2\right)\right)\\
 &=4r^3\sigma\left(\frac{1}{r}(\psi\cdot \vc_2)+1\right), 
 \end{split}
\end{equation}
where to go from step 3 to 4 above we have used the identities
$\psi_{\beta}\cdot \theta_{\alpha}=\psi\cdot\theta$ and
$\psi_{\beta}\cdot \theta=-\psi\cdot \theta_{\alpha}$. Let us assume
$\det D\pi_L=0$. Then $\psi\cdot \vc_2 =-r$. But $|\psi\cdot
\vc_2|\leq |\vc_2|=\sqrt{r^2-3}<r$ and we have a contradiction.  Note that
this contradiction holds for all $\vy\in C_2(r,\alpha)$, not just those in
$B$.  Therefore, the map $\pi_L:\tCc_2\to T^*(Y)$ is an immersion.

We next show the injectivity of the left projection $\pi_L$ through an
analysis of the canonical relations of $\mathcal{T}_2$. Let
$(\ra,\eta)\in \pi_L(\Cc_2)$ and $\vy_1$ and $\vy_2$ be two points in
$C_2$ and $\xi$ and $\txi$ in $\rtwo\smo$ such that $(\ra,\eta;
\vy_1,\xi)$ and $(\ra,\eta; \vy_2,\txi)$ are both in $\Cc_2$. We show
$(\vy_1,\xi)=(\vy_2,\txi)$.  By equating the terms for $\eta$ in the
expression for $\Cc_2$, \eqref{def:C1C2}, one sees, for some
$\sigma_1$ and $\sigma_2$, that 
\begin{equation}\label{eta}
\eta = \begin{pmatrix}
-2\sigma_1\vy_1\cdot(\theta_{\alpha}+s\theta) \\
\frac{2\sigma_1 r}{s}(\vy_1\cdot\theta_{\alpha}) 
\end{pmatrix}=
\begin{pmatrix}
-2\sigma_2\vy_2\cdot(\theta_{\alpha}+s\theta) \\
\frac{2\sigma_2 r}{s}(\vy_2\cdot\theta_{\alpha})
\end{pmatrix}
\end{equation}
where $s=\sqrt{r^2-4}$.  Since $\vy_j\cdot \ta<0$, the bottom equation
in \eqref{eta} shows that $\nu=\sigma_1/\sigma_2>0$. In addition,
\begin{equation}
\frac{2\sigma_1r}{s}(\vy_1\cdot\theta_{\alpha})=\frac{2\sigma_2r}{s}(\vy_2\cdot\theta_{\alpha})\implies
(\sigma_1\vy_1-\sigma_2\vy_2)\cdot \theta_{\alpha}=0
\end{equation}
and
\begin{equation}
-2\sigma_1\vy_1\cdot(\theta_{\alpha}+s\theta)=-2\sigma_2\vy_2\cdot(\theta_{\alpha}+s\theta)\implies
(\sigma_1\vy_1-\sigma_2\vy_2)\cdot \theta=0.
\end{equation}
Hence $\sigma_1\vy_1-\sigma_2\vy_2=0$ or $\vy_2 =
\nu\vy_1$ where $\nu=\frac{\sigma_1}{\sigma_2}>0$. Given that any ray
through origin intersects the curve $ C_2$ at most once and
$\vy_1,\vy_2\in C_2$, it follows that $\sigma_1=\sigma_2$ and
$\vy_1=\vy_2$.  This finishes the proof for $\Tc_2$.  Note that this
proof is valid for any $\vy_1$ and $\vy_2$ in $C_2$, not just for
those in $B$.  In other words, $\pi_L:\tCc_2\to T^*(Y)$ is also
injective, so $\pi_L:\tCc_2\to T^*(Y)$ is an injective immersion.

As already noted, the proof for $\Tc_1$ is similar, and it uses the
following coordinate maps 
\begin{gather}\label{def:x}\vx=\vx(r,\alpha,\beta)=\vc_1
+r\psi=\vc_1(r,\alpha)+r(\cos\beta,\sin\beta),\ \beta\in \otp,\\
\label{coords:C1}(r,\alpha, \beta, \sigma)\mapsto
\lambda_1(r,\alpha,\beta,\sigma):=\paren{r,\alpha,\sigma \dd_\alpha
\phi_1,\sigma \dd_r \phi_1;\vx(r,\alpha,\beta),
-\sigma\dd_\vx\phi_1}\in \Cc_1,\end{gather} however in this case,
$\beta$ is in an open interval containing $[-\pi,\pi]$.

Since $\Tc_i$ and its dual are of order $-1/2$ and have
canonical relations that are local canonical graphs (as they satisfy
the Bolker Assumption), all compositions $\Tc_i^*\Tc_j$ are FIO of
order $-1$ \cite{Ho1971}.
\end{proof}

Let $\Cc = \Cc_1\cup \Cc_2$.  Because $\Cc_1\cap \Cc_2=\emptyset$
above $B$, $\Cc$ is an embedded Lagrangian manifold and since $\Tc =
\Tc_1+\Tc_2$, $\Tc$ is a FIO with canonical relation $\Cc$.  We now
have our main theorem which shows that the canonical relation
$\mathcal{C}$ is 2--1 in a specific sense.  We give explicit
expressions for the expected artefacts in a reconstruction using
$\Tc^*\Tc$ that are caused by this 2--1 map.

\begin{theorem}\label{thm:main}
The projection $\pi_L:\Cc\to T^*(Y)$ is two-to-one in the following
sense.  Let $\lambda=(r,\alpha,\eta)\in \pi_L(\Cc)$.  Then, there is
at least one point $(\vw,\xi)\in B\times \paren{\rtwo\smo}$ such that
$\lambda= \pi_L(\lambda,(\vw,\xi))$.  Necessarily, $\vw$ is either in $
C_1(r,\alpha)$ or in $C_2(r,\alpha)$.  Assume $\vw\in C_1$.  Then,
there is a $\vy\in C_2$ and $\txi\in \rtwo\smo$ such that $\lambda =
\pi_L(\lambda,(\vy,\txi))$.  The point $\vy$ is given by
\eqref{yC2}.  If $\vw\in C_2$, then its corresponding point in $C_1$
is given by \eqref{xC1}.

Let $\Tc^*$ be the modified dual operator in Remark \ref{rem:T*}.  The
canonical relation of $\mathcal{T}^*\mathcal{T}$ is of the form
$\Delta\cup \Lambda_1\cup\Lambda_2$, where $\Delta$ is the diagonal in
$T^*X\times T^*X$ and $\Lambda_1=\tCc_1^t\circ \Cc_2$ and
$\Lambda_2=\tCc_2^t\circ \Cc_1$ are associated to reconstruction
artefacts.  

Let $f$ be a distribution supported in $B$.  If $(\vw,\xi)\in \WF(f)$
and $\xi\cdot\vw\neq 0$, then two artefacts can be generated in
$\Tc^*\Tc f$ associated with $(\vw,\xi)$ (see remark
\ref{rem:elliptic}).
 The base point of the one generated by
$\Lambda_1$ is given by \eqref{art1} where $r$ is defined by
$\eqref{def:r}$ and $\alpha$ is solved from \eqref{orthog2} and the
base point of the artefact caused by $\Lambda_2$ is given by
\eqref{art2} where $r$ is defined by $\eqref{def:r}$ and $\alpha$ is
given by solving \eqref{orthog1}.
\end{theorem}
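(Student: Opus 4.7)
The plan hinges on the observation from Theorem \ref{thm:bolker} that $\pi_L$ restricted to each $\tCc_j$ is an injective immersion, combined with the fact (noted in the excerpt) that $\Cc_1\cap\Cc_2=\emptyset$ above $B$ so that $\Cc$ is an embedded Lagrangian. Hence any 2--1 behavior of $\pi_L:\Cc\to T^*(Y)$ must arise solely from overlap of the images $\pi_L(\Cc_1)$ and $\pi_L(\Cc_2)$, with one preimage contributed by each $\Cc_j$ when $\lambda$ lies in both.

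First I would fix $\lambda=(r,\alpha,\eta)\in\pi_L(\Cc)$ with an assumed $\Cc_1$-preimage $(\vw,\xi)$ having $\vw\in C_1$, and search for a matching $\Cc_2$-preimage $(\vy,\txi)$. Equating the two expressions for $\eta$ read from \eqref{def:C1C2}, with scaling parameters $\sigma_1$ on the $\Cc_1$ side and $\sigma_2$ on the $\Cc_2$ side, produces a $2\times 2$ linear system for the components of $\vy$ in the orthonormal basis $\{\theta_\alpha,\theta\}$; setting $\nu=\sigma_1/\sigma_2$ and solving directly reproduces the matrix expression for $\vy$ given in the theorem statement. The sign condition $\nu>0$ is fixed by the requirement $\vy\in C_2$, and the inequality $|\vc_2|^2=r^2-3<r^2$ confirms that such a positive $\nu$ indeed lands $\vy$ on the circle of radius $r$ centered at $\vc_2$. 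The companion cotangent vector $\txi$ is then read off from the right-projection formula in \eqref{def:C1C2}, and the symmetric argument with $\vw\in C_2$ produces the formula for $\vx\in C_1$.

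For the canonical relation of $\Tc^*\Tc$ I would expand
\[
\Tc^*\Tc=\Tc_1^*\Tc_1+\Tc_2^*\Tc_2+\Tc_1^*\Tc_2+\Tc_2^*\Tc_1.
\]
Each diagonal term $\Tc_j^*\Tc_j$ is a pseudodifferential operator by the Bolker property of $\Tc_j$ (Theorem \ref{thm:bolker}), so its canonical relation lies in the diagonal $\Delta$. The cross terms are FIO with canonical relations $\tCc_1^t\circ\Cc_2=\Lambda_1$ and $\tCc_2^t\circ\Cc_1=\Lambda_2$ respectively, where composability is guaranteed by the semi-global Bolker property together with the cutoff adjoint $\Tc_j^*$ from Remark \ref{rem:T*}. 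To locate the artefacts associated with $(\vw,\xi)\in\WF(f)$ satisfying $\vw\cdot\xi\neq 0$, I would use the microlocal principle that $\Tc_j$ detects $(\vw,\xi)$ only when $\xi$ is conormal to the arc $C_j(r,\alpha)$ through $\vw$, equivalently $\vw-\vc_j(r,\alpha)=\pm r\xi'$ with $\xi'=\xi/|\xi|$. Combining this conormality condition with $|\vw-\vc_j|^2=r^2$ and $|\vc_j|^2=r^2-3$ yields the closed form $r=(|\vw|^2+3)/(2\vw\cdot\xi')$ and the linear systems for $\theta(\alpha)$ stated in the theorem; substituting into the cross-composition formulas derived in the first paragraph then deposits the artefacts at the claimed points $\vx$ and $\vy$.

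The main technical obstacle will be the bookkeeping: verifying that the derived point $\vy$ genuinely satisfies $\vy\cdot\theta_\alpha\geq 0$ (so $\vy\in C_2$) under the positivity $\nu>0$, with the dual sign check for $\vx$; and pushing the FIO composition calculus carefully through the cutoff adjoint so no spurious singularities appear outside $B_{\rho_{\min}}$. The assumption $\vw\cdot\xi\neq 0$ is precisely what ensures $r$ is well-defined and belongs to $(2,\infty)$: the formula $r=(|\vw|^2+3)/(2\vw\cdot\xi')$ requires a nonzero denominator, and the bound $|\vw|<1$ then forces $r>2$.
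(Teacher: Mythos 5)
Your proposal is correct and follows essentially the same route as the paper's proof: equating the $\eta$-components of $\Cc_1$ and $\Cc_2$ from \eqref{def:C1C2} with scalings $\sigma_1,\sigma_2$, setting $\nu=\sigma_1/\sigma_2>0$ to solve for $\vy$ (the ray-through-the-origin argument, using that the circle containing $C_2$ encloses the origin since $|\vc_2|^2=r^2-3<r^2$), expanding $\Tc^*\Tc$ into the four compositions with the diagonal terms absorbed into $\Delta$ by the Bolker property, and deriving $r=(|\vw|^2+3)/(2\vw\cdot\xi')$ and the linear systems for $\theta$ from the conormality condition $\vw-\vc_j=r\xi'$. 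The only cosmetic difference is that you fix the sign of $\nu$ by requiring $\vy\in C_2$, whereas the paper deduces $\nu>0$ directly from the opposite signs of $\vx\cdot\theta_\alpha$ and $\vy\cdot\theta_\alpha$ in the second component of $\eta$; both are valid.
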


Artifacts occurs naturally in several other types of tomography,
such as in limited data X-ray CT \cite{BFJQ}.  The artifacts in this
Compton CT problem are similar to the left-right ambiguity in
synthetic aperture radar (SAR) \cite{homan, NC2004, SU:SAR2013}
because they are both come from the backprojection.  However, the
left-right artifacts in SAR (a mirror-image artifact appearing on the
opposite side of the flight path to an object on the ground) is
geometrically easier to characterize than the artifacts caused by the
$\Lambda_j$ given in Theorem \ref{thm:main}.

In both cases, if one could take only half of the data (e.g., in
Compton CT, only $\Tc_1$, or in SAR using side-looking radar) then one
would not have artifacts.  However, the authors are not aware of any
way reliably to  obtain only the data over $C_1$ (or only $C_2$) in the desired application in airport baggage screening (i.e. in the machine geometry of figure \ref{fig1}).

 \begin{remark}\label{rem:elliptic} In theorem \ref{thm:main},
we note artefacts \emph{can} occur, and we now discuss this more
carefully.  The backprojection reconstruction is made of four terms,
$\Tc^*\Tc = \Tc_1^*\Tc_1+ \Tc_2^*\Tc_2+ \Tc_1^*\Tc_2+ \Tc_2^*\Tc_1$,
and we first analyze the individual compositions.

 If $(\vx,\xi)$ is (co)normal to a circle $C_j(r,\alpha)$ with $r\in
\supp(\vp)$, then this singularity is visible in $\vp\Tc_j$ because
the cutoff $\vp$ is nonzero near $r$ and $\Tc_j$, is elliptic.
Therefore, the singularity will appear in the composition
$\Tc_j^*\Tc_j$, and any artefact caused by $\Tc^*_i\Tc_j$ when $i\neq
j$ will also appear.

On the other hand, if $(\vx,\xi)$ is (co)normal to a circle
$C_j(r,\alpha)$ with $r\notin \supp(\vp)$, then this singularity is
smoothed by $\vp\Tc_j$ because the cutoff $\vp$ is zero near $r$, and
the singularity will not appear in the composition $\Tc_j^*\Tc_j$, and
no artefact will be created by $\Tc^*_i\Tc_j$ when $i\neq j$.

However, artefacts and visible singularities can cancel each other
because $\Tc^*\Tc$ is the sum of four terms of the forms above.

\end{remark}

Our next remark describes the strength in Sobolev scale of the
artefacts and generalizes our theorem for \emph{filtered}
backprojection.

\begin{remark}\label{rem:strength} The artefacts caused by a
singularity of $f$ are as strong as the reconstruction of that
singularity. 

  The visible singularities come from the compositions $\Tc_1^*\Tc_1$
and $\Tc_2^*\Tc_2$ since these are pseudodifferential operators of
order $-1$. The artefacts come from the ``cross'' compositions
$\Tc_2^* \Tc_1$ and $\Tc_1^* \Tc_2$, and they are FIO of order $-1$.
Therefore, since the terms that preserve the real singularities of
$f$, $\Tc_i^*\Tc_i$, $i=1,2$, are also of order $-1$, $\Tc^*\Tc$
smooths each singularity of $f$ by one order in Sobolev norm
\emph{and} the compositions $\Tc^*_i\Tc_j$ for $i\neq j$ create
artefacts from that singularity that are also one order smoother than
that singularity.  

    Second, our results are valid, not only for the normal operator
$\Tc^*\Tc$ but for any \emph{filtered} backprojection method $\Tc^*P
\Tc$ where $P$ is a pseudodifferential operator.  This is true since
pseudodifferential operators have canonical relation $\Delta$ and they
do not move singularities, so our microlocal calculations are the
same.  If $P$ has order $k$, then $\Tc^*P\Tc$ smooths each singularity
of $f$ by order $-(k-1)$ in Sobolev norm and creates an artefact from
that singularity that is also $-(k-1)$ orders smoother.
\end{remark}

\begin{proof} 
Let $(\ra,\eta)\in \pi_L(\Cc)$, then there is an $(\vw,\xi)\in B\times
\paren{\rtwo\smo}$ such that $(\ra,\eta;\vw,\xi)\in \Cc$.  Either
$\vw\in C_1(\ra)$ or $\vw\in C_2(\ra)$, and this is determined by
$(\ra)$.  At the end of this part of the proof, we will outline what
to do if $\vw\in C_2(\ra)$.

We assume $\vw\in C_1(\ra)$ and, for this part of the proof--in which
$\vw\in C_1$--we let $\vx = \vw$.  Assume there is another point in $
\tCc$ that maps to $(\ra,\eta)$ under $\pi_L$.  That point must be on
$\tCc_2$ and it must be unique since $\pi_L:\tCc_j\to \Tsty$ is
injective for $j=1,2$ by Theorem \ref{thm:bolker}.  Let $(\vy,\txi)$
be chosen so $\vy\in C_2(\ra)$ and $(\ra,\eta;\vy,\txi)$ is the
preimage in $\tCc_2$ of $(\ra,\eta)$.  Comparing the $\eta$ term of
the expressions \eqref{def:C1C2} for $\Cc_1$ and $\Cc_2$, we see there
are numbers $\sigma_1$ and $\sigma_2$ such that 
\begin{equation}
\label{equ3}
\eta = \begin{pmatrix}
-2\sigma_1\vx\cdot(\theta_{\alpha}-s\theta) \\
-\frac{2\sigma_1 r}{s}(\vx\cdot\theta_{\alpha})
\end{pmatrix}=\begin{pmatrix}
-2\sigma_2\vy\cdot(\theta_{\alpha}+s\theta) \\
\frac{2\sigma_2 r}{s}(\vy\cdot\theta_{\alpha}).
\end{pmatrix},
\end{equation}
This implies that
$\sigma_1(\vx\cdot \theta_\alpha) = -\sigma_2(\vy\cdot\theta_\alpha)$.
Since $\vx\cdot \ta$ and $\vy\cdot \ta$ have opposite signs, $\sigma_1$ and
$\sigma_2$ have the same sign.  Let $\nu = \sigma_1/\sigma_2$, then $\nu>0$
and if we solve \eqref{equ3} for $\vy$, we see \bel{y from x} \vy =
\nu\paren{(-\vx\cdot \ta)\ta + \paren{\frac{2}{s} \vx\cdot \ta - \vx\cdot
    \theta}\theta} \ \text{ for some $\nu>0$.}\ee 
Equivalently we can write the above as
\begin{equation}
\label{yC2}
    \vy=\nu[\theta_{\alpha},\theta]\begin{bmatrix}
    -\theta_{\alpha}^T \\
    \frac{2}{s}\theta_{\alpha}^T-\theta^T
    \end{bmatrix}\vx,\ \ \ (C_1\to C_2).
\end{equation}
Given $\ra$ and $\vx$, this equation describes the point $\vy$ that is
the base point of the preimage in $\tCc_2$ of $(\ra,\eta)$.

Equation \eqref{yC2} for arbitrary $\nu>0$ describes a ray starting at
$\zero$.  Because the circle containing $C_2(\ra)$ encloses $\zero$, this
ray intersects the circle at a unique point.  Since any point $\vy'$ on
this ray satisfies $\vy'\cdot \ta<0$, the unique point on the circle is on
$C_2(\ra)$.  If $\vw=\vx\in C_1$, then this proves that $\pi_L$ is
two-to-one
as described in the theorem.

To prove the statement about $\pi_L$ being two-to-one if the point
$\vw$ at the start of the proof is in $C_2(\ra)$ then one goes through
the same proof but solves for $\vx$ in terms of $\vy$ and replace
$\vy$ by $ \vw$ in
\eqref{equ3} to get 
\begin{equation}
\label{xC1}
    \vx=\frac{1}{\nu}[\theta_{\alpha},\theta]\begin{bmatrix}
    -\theta_{\alpha}^T \\
    -\frac{2}{s}\theta_{\alpha}^T-\theta^T
    \end{bmatrix}\vw,\ \ \ (C_2\to C_1).
\end{equation}
Given $\ra$ and $\vw$, this equation describes the point $\vx$ that is
the base point of the preimage in $\tCc_1$ of $(\ra,\eta)$.

To describe explicitly the artefacts which occur due to an application
of the normal operator $\mathcal{T}^*\mathcal{T}$, let us consider the
canonical relation $\tCc^t\circ \Cc$. We have the expansion
\begin{equation}
\begin{split}
\tCc^t\circ \Cc&=(\tCc_1\cup\Cc_2)^t\circ (\tCc_1\cup\Cc_2)\\
&=(\tCc_1^t\cup\Cc_1)\cup (\tCc_2^t\cup\Cc_2)\cup (\tCc_1^t\cup\Cc_2)\cup
(\tCc_2^t\cup\Cc_1)\\
&\subset \Delta \cup \Lambda_1\cup\Lambda_2,
\end{split}
\end{equation}
where $\Lambda_1=\tCc_1^t\cup\Cc_2$ and $\Lambda_2=\tCc_2^t\cup\Cc_1$.
Note that $\tCc_j^t\circ \Cc_j\subset \Delta$ for $j=1,2$ because
$\tCc_j$ satisfies the Bolker Assumption.

Let $(\vw,\xi)\in T^*(B)$ be such
that $\vw\cdot\xi\neq 0$ and let $\xi' = \xi/\abs{\xi}$. We now
calculate the
$(r,\theta)$ for which the circular arc $C_1$ intersects $\vw$ normal to
$\xi$, explicitly in terms of $(\vw,\xi)$. For $\vw\in C_1$ we know
$\vc_1=\vw-r\xi'$. Therefore $$ |\vc_1|^2=r^2-3=|\vw-r\xi'|^2=|\vw|^2-2r
\vw\cdot\xi' +r^2
$$
and it follows that 
\bel{def:r}r=\frac{|\vw|^2+3}{2(\vw\cdot\xi')}.\ee
Also, to get $(r,\theta)$ explicitly in terms of $(\vw,\xi')$,
\bel{orthog1}
\begin{pmatrix}
1 & s\\
-s & 1
\end{pmatrix}\theta=\vw-r\xi'.
\ee
To check that
$\theta$ is a unit vector, note that
\[
|\theta|=\frac{1}{1+s^2}\left|
\begin{pmatrix}
1 & -s\\
s & 1
\end{pmatrix}(\vx-r\xi')\right|=\frac{|\vw-r\xi'|\sqrt{1+s^2}}{1+s^2}=1,
\]
as $|\vw-r\xi'|=|\vc_1|=\sqrt{1+s^2}$. Once $(r,\theta)$ are known, the
artefact $\vy$ induced by $\Lambda_2$ is given by equation
\eqref{yC2}
\begin{equation}\label{art2}
    \vy=\nu[\theta_{\alpha},\theta]\begin{bmatrix}
    -\theta_{\alpha}^T \\
    \frac{2}{s}\theta_{\alpha}^T-\theta^T
    \end{bmatrix}\vw
\end{equation}
where $\nu>0$ is such that $\vy\in C_2$.  This point $\vy$ is the base
point of the artefact corresponding to $(\vw,\xi)$ that is added by
$\Lambda_1$.

Similarly we can express the $(r,\theta)$ for which the circular arc $C_2$
intersects $\vw$ normal to $\xi$, explicitly in terms of $(\vw,\xi)$.
When $\vw\in C_2$, we know $\vc_2=\vw-r\xi'$. Hence the calculation for
$r$ is the same as \eqref{def:r} and
\bel{orthog2}
\begin{pmatrix}
1 & -s\\
s & 1
\end{pmatrix}\theta=\vw-r\xi',
\ee
and hence the artefact $\vx$ induced by $\Lambda_1$ is given by
\eqref{xC1}
\bel{art1}
    \vx=\frac{1}{\nu}[\theta_{\alpha},\theta]\begin{bmatrix}
    -\theta_{\alpha}^T \\
    -\frac{2}{s}\theta_{\alpha}^T-\theta^T
    \end{bmatrix}\vw,
\ee
where $\nu$ is chosen so $\vx\in C_1$. Then, $\vx$ is the base point of
the artefact in $\Lambda_1$ caused by $(\vw,\xi)$.
\end{proof}

\begin{remark}\label{rem:what it means} Theorem
\ref{thm:main} proves that $\Cc$ is 2-1 everywhere above $B$, and
equations \eqref{art2} and \eqref{art1} provide expressions for the
pairs $\vx,\vy$ whose image under $\Cc$ is the same. Intuitively we
can think of this as an inherent ``confusion" in the data $\Tc f$ as
to where the ``true" singularities (e.g., object boundaries or
contours) in $f$ lie (and in what directions). To give more detail,
let $f$ have a singularity at $\vw$ in direction $\xi$.  The
singularity at $\vw$ is detected in the data $\Tc f$ when the circular
arc $C_j$ (for some $j=1, 2$) intersects $\vw$ normal to $\xi$. Such a
$C_j$ always exists by Theorem \ref{thm:main} (see the expressions for
$(r,\theta)$ in terms of $(\vw,\xi)$), and hence the singularity at
$\vw$ is resolved. However, due to the 2-1 nature of $\Cc$, we only
have sufficient information to say that the true singularity lies at
$\vw$ \emph{or} some $\vx,\vy$ (as in equations \eqref{art2} and
\eqref{art1}). Hence we see image artefacts in the reconstruction at
$\vx$ (for $(\vw,\xi)\in N^*C_2$) and $\vy$ (for $(\vw,\xi)\in
N^*C_1$), and the artefacts appear as ``additional" (unwanted) image
singularities on one-dimensional manifolds (see figure \ref{fig8}).
\end{remark}

\subsection{Injectivity}\label{tsec2} Here we prove the injectivity of
the toric section transform $\mathcal{T}$ on $L_c^\infty(B)$,
$L^\infty$ functions of compact support in $B$.  We write points in
$\rtwo$ in polar coordinates $(\rho,\alpha)\mapsto \rho
\theta(\alpha)= \rho(\cos(\alpha),\sin(\alpha))$.  For an integrable
function $F(\rho,\alpha)$ and $l\in \zz$, we define the
\emph{${l}^{\text{th}}$ polar Fourier coefficient of $f$} to be 
\[F_l(\rho)=\frac{1}{2\pi} \int_{\alpha = 0}^{2\pi}
F(\rho,\alpha)e^{-il \alpha}\,d\alpha. \]

Let $t=\sqrt{r^2-3}$ and let $\alpha(t)=\cos^{-1}\frac{1}{t}$. Then
we can parametrize the set of points on the toric section in polar
coordinates
\begin{equation}
\begin{split}
\rho&=\sqrt{t^2\cos^2\varphi+3}-t\cos\varphi,\ \ \ \ -\alpha(t)\leq
\varphi\leq\alpha(t),\ t\geq1\\
\theta&=\alpha+\alpha(t)+\varphi,\ \ \ \text{or}\ \ \ \theta=\alpha-\alpha(t)+\varphi,\ \ \ \ 0\leq\alpha\leq2\pi
\end{split}
\end{equation}
and it follows that
\begin{equation}\label{ttrans}\begin{aligned}
\mathcal{T}f(t,\alpha)=\int_{-\alpha(t)}^{\alpha(t)}\sqrt{\rho^2+\left(\frac{\partial
    \rho}{\partial\varphi}\right)^2}
\large[&F(\rho,\alpha+\alpha(t)+\varphi)\\
&\ +F(\rho,\alpha-\alpha(t)+\varphi)\large]\mid_{\rho=\sqrt{t^2\cos^2\varphi+3}-t\cos\varphi}\mathrm{d}\varphi,\end{aligned}
\end{equation}
where $F(\rho,\alpha)=f(\rho\theta(\alpha))$ is the polar form of $f$. We
now have our second main theorem which follows using similar ideas to
Cormack's \cite{cormack}.

\begin{theorem}\label{thm:IntegralEqun}
The toric section transform $\mathcal{T} : L^\infty_c(B)\to
L^\infty(Y)$, where $Y=(2,\infty)\times \otp$, is injective.
\end{theorem}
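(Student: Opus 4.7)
The plan is to adapt Cormack's classical argument for the Radon transform: I expand $f$ into polar angular Fourier modes, reduce $\mathcal{T}f=0$ to a one-dimensional integral equation for each coefficient $F_l$ via \eqref{ttrans}, change variables so that the equation becomes a Chebyshev-Abel transform, and invoke Cormack's inversion to conclude $F_l\equiv 0$ for every $l\in\mathbb{Z}$, whence $f=0$ almost everywhere.

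Writing $F(\rho,\alpha)=\sum_{l\in\mathbb{Z}} F_l(\rho)e^{il\alpha}$ with $F_l\in L^\infty((0,1))$ vanishing on $[1-\eps,1)$ for some $\eps>0$ (since $f$ is compactly supported in $B$), I take the $l$-th $\alpha$-Fourier coefficient of \eqref{ttrans}. Both $\rho(t,\varphi)$ and $w(t,\varphi):=\sqrt{\rho^2+(\partial_\varphi\rho)^2}$ are even in $\varphi$, so the $\sin(l\varphi)$-contribution drops out and I obtain
\[
(\mathcal{T}f)_l(t)=4\cos\bigl(l\alpha(t)\bigr)\int_0^{\alpha(t)} w(t,\varphi)\cos(l\varphi)\,F_l(\rho(t,\varphi))\,d\varphi.
\]
Squaring the identity $\rho+t\cos\varphi=\sqrt{t^2\cos^2\varphi+3}$ gives $t\cos\varphi=(3-\rho^2)/(2\rho)$, so the substitution $v=(3-\rho^2)/(2\rho)$ satisfies $\cos\varphi=v/t$, maps $\varphi\in[0,\alpha(t)]$ diffeomorphically onto $v\in[1,t]$ (since $dv/d\rho=-(3+\rho^2)/(2\rho^2)<0$), and yields $d\varphi=dv/\sqrt{t^2-v^2}$. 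Together with $\cos(l\varphi)=T_{|l|}(v/t)$ and the algebraic simplification $w=2\rho^2\sqrt{t^2+3}/(\rho^2+3)$, this rewrites the equation in the Chebyshev-Abel form
\[
\frac{(\mathcal{T}f)_l(t)}{4\cos(l\alpha(t))\sqrt{t^2+3}}=\int_1^t \frac{T_{|l|}(v/t)}{\sqrt{t^2-v^2}}\,G_l(v)\,dv,\qquad G_l(v)=\frac{2\rho(v)^2F_l(\rho(v))}{\rho(v)^2+3}.
\]

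Now suppose $\mathcal{T}f\equiv 0$. The left-hand side vanishes away from the finite zero set of $\cos(l\alpha(t))$ in $(1,\infty)$ (finite because $\alpha(t)\in(0,\pi/2)$), and continuity of the right-hand side in $t$ extends the identity to every $t>1$. Under the substitution $s=1/t$, $u=1/v$, this is precisely the Chebyshev-Abel integral equation inverted explicitly by Cormack in \cite{cormack}; his Chebyshev-polynomial inversion formula forces $G_l\equiv 0$, hence $F_l\equiv 0$ for every $l\in\mathbb{Z}$, and therefore $f=0$ almost everywhere. The main obstacle is justifying Cormack's inversion at the level of $L^\infty$ rather than continuous or smooth data; since $F_l$ is bounded and compactly supported in $(0,1)$, the unknown $G_l$ belongs to $L^1((1,\infty))\cap L^\infty((1,\infty))$ and decays like $v^{-2}$ at infinity, which is enough to execute the inversion as a weakly-singular Volterra iteration or, alternatively, to reduce to the continuous case by mollification in $\rho$ followed by a limit argument.
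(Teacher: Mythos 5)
Your proposal is correct and takes essentially the same route as the paper's proof: polar Fourier decomposition, the substitution $v=t\cos\varphi$ turning each mode into a Chebyshev-weighted Abel integral equation, a continuity argument to dispose of the finitely many zeros of the prefactor $\cos(l\alpha(t))=T_{|l|}(1/t)$, and uniqueness for the resulting weakly singular equation. The only (cosmetic) difference is the last step, where you invoke Cormack's explicit inversion after the $1/t$, $1/v$ substitution while the paper cites the uniqueness theory for generalized Abel equations with nonvanishing diagonal kernel; the paper's Remark \ref{rem:Cormack} notes these are interchangeable.
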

\begin{proof}
After exploiting the rotational invariance of the transform (\ref{ttrans})
we have
\begin{equation}
\label{equ2.1}
\paren{\mathcal{T}f}_l(t)=T_{\abs{l}}\left(\frac{1}{t}\right)\int_{-\alpha(t)}^{\alpha(t)}\sqrt{\rho^2+\left(\frac{\partial
    \rho}{\partial\varphi}\right)^2}F_l(\rho)e^{-il\varphi}\mid_{\rho=\sqrt{t^2\cos^2\varphi+3}-t\cos\varphi}\mathrm{d}\varphi,
\end{equation}
where
\begin{equation}
\paren{\mathcal{T}f}_l(t)=\frac{1}{2\pi}\int_{0}^{2\pi}\mathcal{T}f(t,\alpha)e^{-il\alpha}\mathrm{d}\alpha,
\end{equation}
and $T_{\abs{l}}$ is Chebyshev polynomial of
the first kind of order $\abs{l}$.

The arc length measure on the circle is
\begin{equation}
\mathrm{d}s=\sqrt{\rho^2+\left(\frac{\partial
    \rho}{\partial\varphi}\right)^2}\mathrm{d}\varphi=\rho\sqrt{\frac{t^2+3}{t^2\cos^2\varphi+3}}\mathrm{d}\varphi=r\left(1-\frac{t\cos\varphi}{\sqrt{t^2\cos^2\varphi+3}}\right)\mathrm{d}\varphi
\end{equation}
and using the symmetry of equation (\ref{equ2.1}) in $\varphi$ about
$\varphi=0$ we have
\begin{equation}
\begin{split}
\label{equ2}
\frac{\paren{\mathcal{T}f}_l(t)}{4r}&=T_{\abs{l}}\left(\frac{1}{t}\right)\int_{0}^{\alpha(t)}\left(1-\frac{t\cos\varphi}{\sqrt{t^2\cos^2\varphi+3}}\right)F_l(\rho)\cos(l\varphi)\mid_{\rho=\sqrt{t^2\cos^2\varphi+3}-t\cos\varphi}\mathrm{d}\varphi\\
&=T_{\abs{l}}\left(\frac{1}{t}\right)\int_{0}^{\alpha(t)}\tilde{F}_l(t\cos\varphi)\cos(l\varphi)\mathrm{d}\varphi,
\end{split}
\end{equation}
where $\tilde{f}$ is 
defined as
\begin{equation}
\label{tilF}
    \tilde{f}(x)=\left(1-\frac{|x|}{\sqrt{|x|+3}}\right)f\left((\sqrt{|x|^2+3}-|x|)\cdot\frac{x}{|x|}\right)
\end{equation}
and $\tilde{F}(\rho,\alpha)
= \tilde{f}(\rho\theta(\alpha))$ is the polar form of $\tilde{f}$.
Note that $\tilde{F}$ is  in $L^\infty_c(B')$ where $B'$ is the exterior
of the closed unit ball.

After making the substitution $\rho=s\cos\varphi$, we have
\begin{equation}\label{eq0}
\frac{\paren{\mathcal{T}f}_l(t)}{4r}=T_{\abs{l}}\left(\frac{1}{t}\right)\int_{1}^{t}\frac{\tilde{F}_l(\rho)T_{\abs{l}}\left(\frac{\rho}{t}\right)}{\sqrt{t^2-\rho^2}}\mathrm{d}\rho.
\end{equation}

We claim that the
function $g_l$ defined
by
\begin{equation}\label{equ4a}
g_l(t)=\int_{1}^{t}\frac{\tilde{F}_l(\rho)T_{\abs{l}}\left(\frac{\rho}{t}\right)}{\sqrt{t^2-\rho^2}}\mathrm{d}\rho
\end{equation}
is continuous on $[1,\infty)$.  To show this, one just writes
$g_l(t)-g_l(s)$ for $s<t$ as an integral on $[s,t]$ plus an integral
on $[1,s]$.  Because $\tilde{F}_l\in L^{\infty}([1,\infty))$, the
integral on $[s,t]$ clearly goes to zero as $s\to t$.  To show the
integral on $[1,s]$ goes to zero as $s\to t$, one makes the change of
variable $u=s-\rho$ and then uses Dominated Convergence on the
integrand to show it converges to zero, too (after assuming $s>t/2$).
In this case, the integrand is bounded near the endpoint that depends
on $s$.  The proof of continuity if $t<s$ uses similar ideas;
dominated convergence works on the integral on $[1,t]$ and the
integral on $[t,s]$ requires the change of variable.

Now, assume that $\paren{\mathcal{T}f}_l=0$. Since $g_l$ is continuous,
$g_l=0$ everywhere. So we have
\begin{equation}
\label{equ4}
\int_{1}^{t}\frac{\tilde{F}_l(\rho)T_{\abs{l}}\left(\frac{\rho}{t}\right)}{\sqrt{t^2-\rho^2}}\mathrm{d}\rho=0
\end{equation}
for all $t\in (1,\infty)$.  Then, equation \eqref{equ4a} is a generalized
Abel integral equation of the first kind and the right-hand side is absolutely
continuous The kernel is
\[\bparen{\frac{T_{\abs{l}}\paren{\frac{\rho}{t}}}{\sqrt{t+\rho}}}\frac{1}{\sqrt{t-\rho}}
,\] and the term in brackets is nonzero when $t=\rho$.  Using this
information and arguments in \cite{Tricomi, Yoshida} and stated in
\cite[Theorem B]{Q1983a}, one sees that $f_l=0$ and thus $\Tc$ is
invertible on domain $L^\infty(B)$.\end{proof}


\begin{remark}\label{rem:Cormack} The integral equation in
\eqref{eq0} provides a method to reconstruct the polar Fourier
coefficients of $f$ from the data. If one lets 
\[g_l(t) = \frac{\paren{\Tc f}_l(t)}
{4rT_{\abs{l}}\paren{\frac{1}{t}}},
\]then \eqref{eq0} becomes \eqref{equ4a}.  With a simple change of
variables in \eqref{equ4a}, $r=1/\rho$ and letting $p=1/t$ one reduces
the integral on the right-hand side of \eqref{equ4a} essentially to the
integral equation in \cite[equation (10)]{cormack} for the
$l^\text{th}$ polar Fourier coefficient a function that is the product
of a nonzero function and a composition of $f_l$ with a
diffeomorphism.

Cormack inverts his expression \cite[equation (10)]{cormack} by
another Abel type equation (see \cite[equations (17) and
(18)]{cormack}), and this would give the related function and hence,
$f$.  However, this inversion formula is numerically unstable because
it involves $T_l(p/z)$ where $p>z$ and $T_l(p/z)$ blows up like
$(p/z)^l$.  This is why Cormack developed a different reconstruction
method for X-ray CT using an SVD in \cite{cormack1964}.
\end{remark}

So far we have shown that the problem of reconstructing a density
$f\in L^{\infty}(B))$ from $\mathcal{T}f$ is uniquely solvable, and
provided explicit expressions for the expected artefacts in the
reconstruction. We next go on to demonstrate our theory through
discrete simulations.


\section{Reconstruction algorithm and results}\label{res}

Here we present reconstruction algorithms for the reconstruction of two dimensional densities from toric section integral data and demonstrate the artefacts described by the theory in section \ref{tsec1}. 

We take a discrete (algebraic) approach to reconstruction. That is we discretize the operator $\mathcal{T}$ on a pixel grid (see figure \ref{fig1.1}) and find
\begin{equation}
\label{recmth}
\argmin_{\vv}\|A\vv-b\|^2_2+\lambda^2\mathcal{G}(\vv),
\end{equation}
where $A$ is the discrete form of $\mathcal{T}$ (each row of $A$ is the vectorized form of a binary image as shown in figure \ref{fig1.1}) and $\mathcal{G}(x)$ is a regularization penalty (e.g. $\mathcal{G}(\vv)=\|\vv\|^2_2$ (Tikhonov) or $\mathcal{G}(\vv)=\sum_i|\vv_i-\vv_{i-1}|$ (TV)), with regularization parameter $\lambda$. Here $\vv$ represents the vectorized form of the density image (which is to be reconstructed) and $b$ (our data) represents the Compton scattered intensity.

To simulate noisy data we take a vectorized density image $x$ (such as those presented in figure \ref{fig4}) we add a Gaussian random noise
\begin{equation}
\label{noise}
\vb=A\vv+\epsilon\times\frac{\vg\|A\vv\|_2}{\sqrt{n}},
\end{equation}
where $\vg$ is a pseudo-random vector of samples drawn from a standard normal distribution and $n$ is the number of entries in $\vb$. Here $\epsilon$ denotes the noise level in the sense that
$$\frac{\|\vb-A\vv\|_2}{\|A\vv\|_2}\approx\epsilon$$
for $n$ large enough. It is noted that simulating data as
  in \eqref{noise} can often lead to optimistic results (due to the inverse crime). In appendix \ref{app2} we present additional reconstructions of a``multiple ring" phantom using analytically generated toric integral data, to avoid the inverse crime. The ring phantom $f$ is such that a closed form for $\Tc f$ is possible. For the more general phantoms considered later in this section, we have not found such a closed form. Hence in the main text, we choose to simulate the data as in \eqref{noise}. We shall see later (in figure \ref{drec}) that the artefacts predicted by our microlocal theory are present using \eqref{noise} for data simulation, so such a data generation is sufficient to verify our theoretical results.

Throughout the simulations presented here we simulate toric section integral data for rotation angles $\alpha\in\left\{\frac{j\pi}{180} : 1\leq j\leq 360\right\}$ and for circle radii $r\in\left\{\frac{j^2+200^2}{2j} : 1\leq j\leq 199\right\}$, where the pixel grid size is 200--200. So $n=360\times 199=71640$ and $A$ has $200^2$ columns.
\begin{figure}[!h]
\begin{subfigure}{0.47\textwidth}
\includegraphics[width=0.9\linewidth, height=0.9\linewidth]{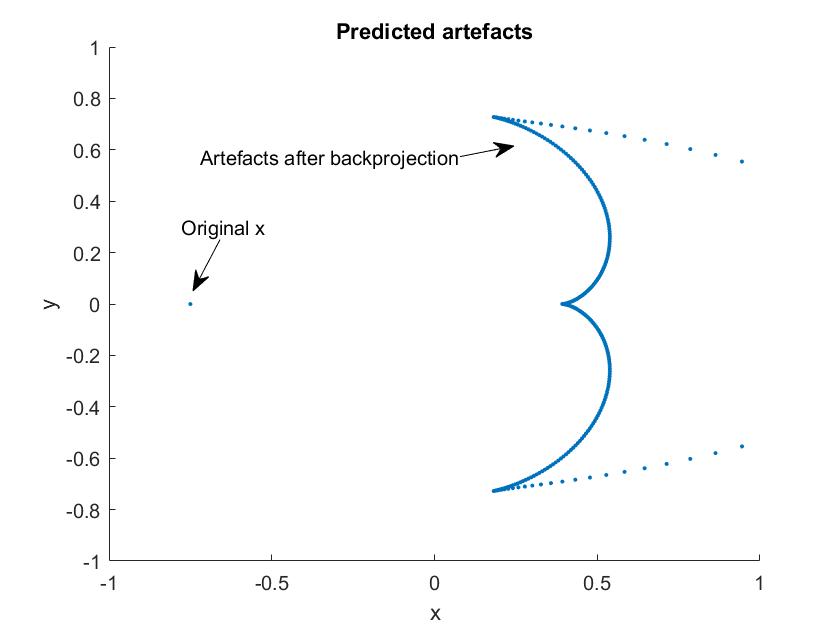} 
\end{subfigure}
\begin{subfigure}{0.47\textwidth}
\includegraphics[width=1.0\linewidth, height=0.9\linewidth]{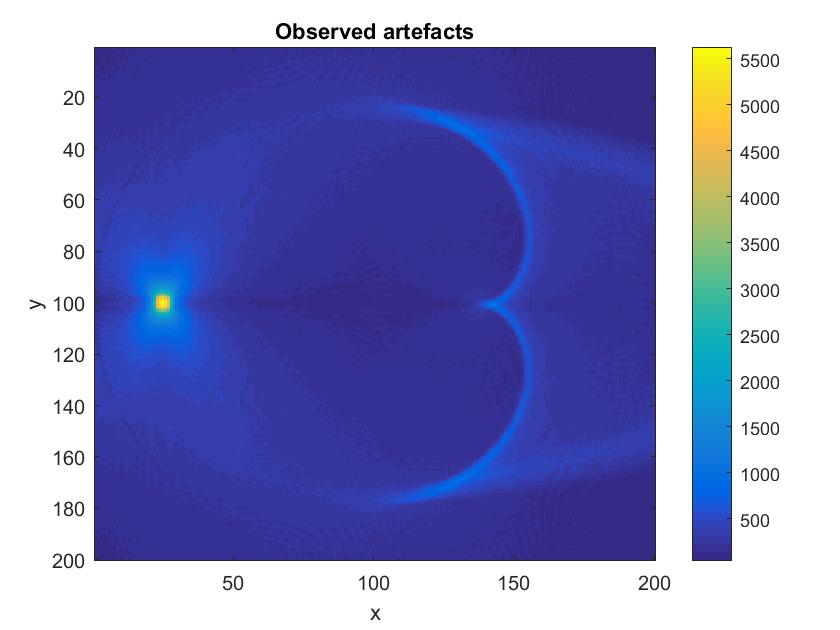}
\end{subfigure}
\caption{Predicted and observed artefacts from reconstructing a delta function far from the origin by backprojection.}
\label{fig8}
\end{figure}
\begin{figure}[!h]
\begin{subfigure}{0.47\textwidth}
\includegraphics[width=0.9\linewidth, height=0.9\linewidth]{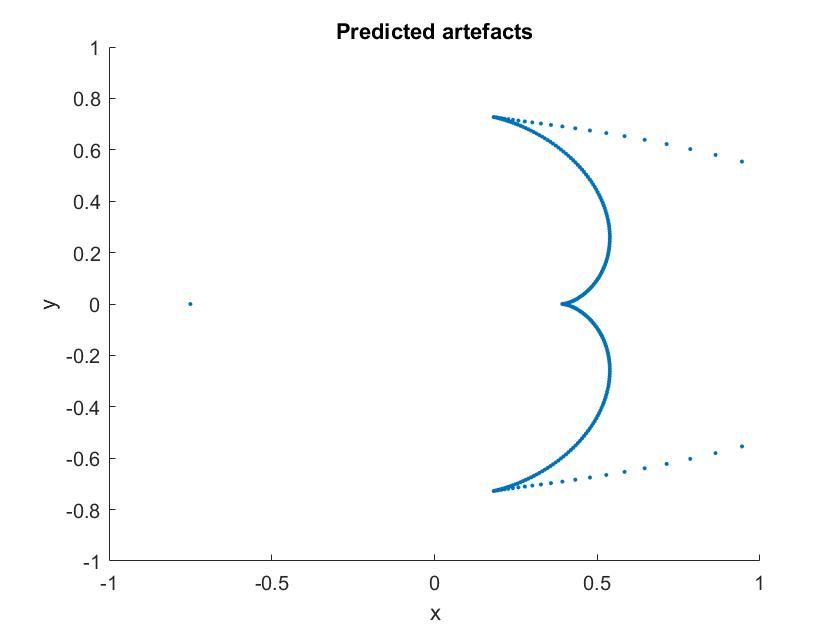} 
\end{subfigure}
\begin{subfigure}{0.47\textwidth}
\includegraphics[width=1.0\linewidth, height=0.9\linewidth]{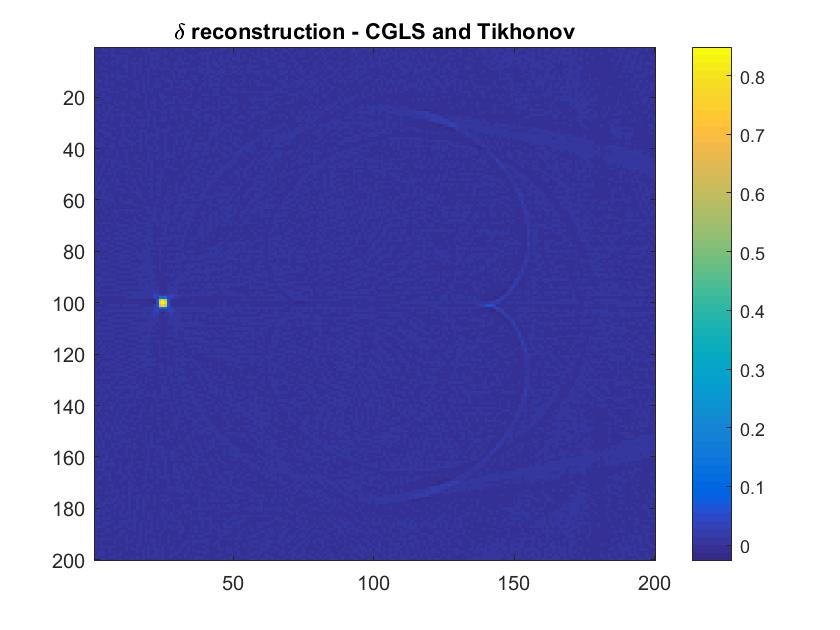}
\end{subfigure}
\begin{subfigure}{0.47\textwidth}
\includegraphics[width=1.0\linewidth, height=0.9\linewidth]{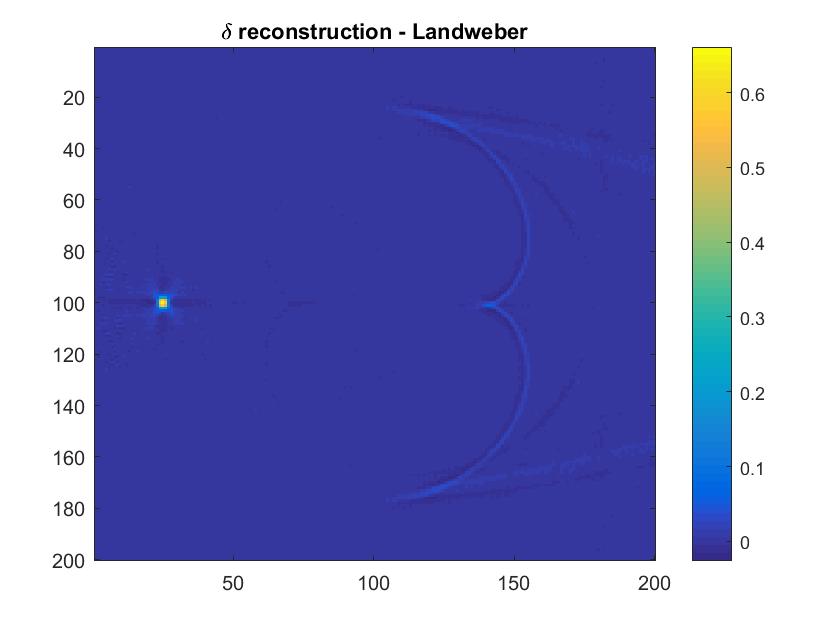} 
\end{subfigure}
\begin{subfigure}{0.47\textwidth}
\includegraphics[width=1.0\linewidth, height=0.9\linewidth]{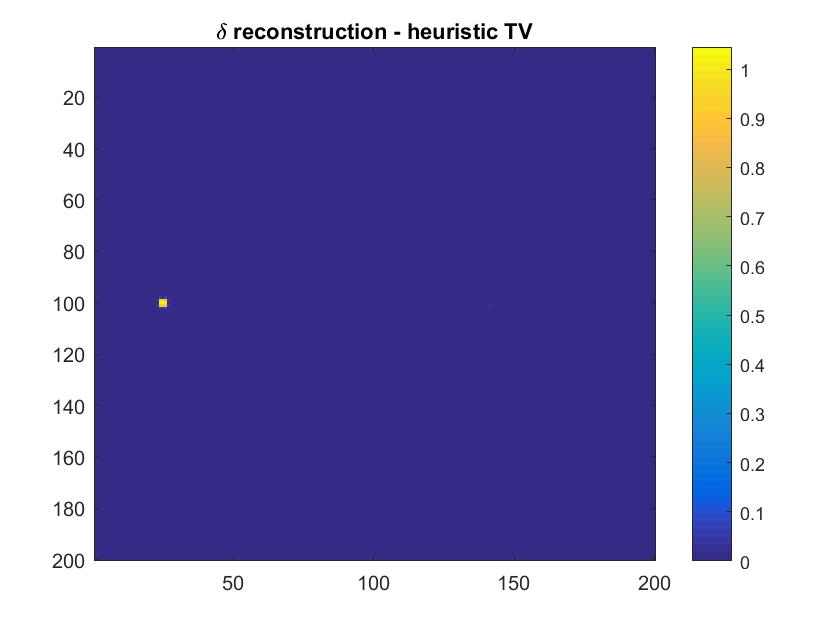}
\end{subfigure}
\caption{Reconstructions of a delta function with $5\%$ added noise. Top left --  Predicted artefacts. Top right -- CGLS and Tikhonov.  Bottom left -- Landweber. Bottom right -- heuristic TV.}
\label{drec}
\end{figure}

To simulate the artefacts implied by the theory presented in section \ref{tsec1}, we consider the reconstruction of a delta function by (unfiltered) backprojection. That is by an application of the normal operator $\mathcal{T}^*\mathcal{T}\delta$, where $\delta$ has its support in the unit ball. To calculate the artefacts induced by $\Lambda_1=\tCc_1^t\circ\Cc_2$ and $\Lambda_2=\tCc_2^t\circ\Cc_1$ (as in Theorem \ref{thm:main}) when $f=\delta$ (so here $f$ is non zero only at a single point and its wavefront set lies in all directions), let us consider a point $\vx=|\vx|(-1,0)$ on the $x$ axis. Then equation \eqref{art1} becomes
\begin{equation}
    \vy=\paren{1+\frac{2}{s}\sin\alpha\cos\alpha,\frac{2}{s}\sin^2\alpha}^T
\end{equation}
up to scaling. Similarly for $\vy=|\vy|(-1,0)$ equation \eqref{art2} becomes
\begin{equation}
    \vx=\paren{1-\frac{2}{s}\sin\alpha\cos\alpha,-\frac{2}{s}\sin^2\alpha}^T,
\end{equation}
again up to scaling. Let us define $\psi_1 : [0,\pi]\to\text{sg}(\mathbb{R}^2)$ and $\psi_2 : [-\pi,0]\to\text{sg}(\mathbb{R}^2)$ as
\begin{equation}
\label{art4}
    \psi_1(\alpha)=\left\{\nu\paren{1-\frac{2}{s}\sin\alpha\cos\alpha,-\frac{2}{s}\sin^2\alpha}:\nu\in\mathbb{R}\right\}
    \cap C_1\cap\left\{\vx\cdot\theta_{\alpha}<0\right\}.
\end{equation}
and
\begin{equation}
\label{art3}
    \psi_2(\alpha)=\left\{\nu\paren{1+\frac{2}{s}\sin\alpha\cos\alpha,\frac{2}{s}\sin^2\alpha} :\nu\in\mathbb{R}\right\}\cap C_2\cap\left\{\vx\cdot\theta_{\alpha}>0\right\}
\end{equation}
where $\text{sg}(\mathbb{R}^2)$ denotes the set of singleton subsets of $\mathbb{R}^2$. Also
\begin{equation}
    s=\left|\frac{3-|\vx|^2+2(\vx\cdot\theta)}{2(\vx\cdot\theta_{\alpha})}\right|,
\end{equation}
to get $s$ in terms of $\vx$ and a rotation $\alpha$. Then
$\psi_1([0,\pi])$ and $\psi_2([-\pi,0])$ are the set of artefacts in the plane associated to $\Lambda_1$ and $\Lambda_2$ respectively. Note that we need only consider the domain $[0,\pi]$ for $\psi_1$ as the circle $C_1$ does not intersect $\vx=|\vx|(-1,0)$ for any $\alpha\in (0,\pi)$, and conversely for $\psi_2$. It is clear that $\psi_1([0,\pi])=P\psi_2([-\pi,0])$, where $P$ denotes a reflection in the line $\{t\vx : t\in\mathbb{R}\}$ (or the $x$ axis in this case). Hence the artefacts associated to $\Lambda_1$ are those associated to $\Lambda_2$ but reflected in the line $\{t\vx : t\in\mathbb{R}\}$, for a given $\vx\in\mathbb{R}^2$, when $f$ has singularities at $\vx$ in all directions $\xi$. We can use equations \eqref{art3} and \eqref{art4} to draw curves in the plane where we expect there to be image artefacts. To simulate $\delta$ discretely we assign a value of 1 to nine neighbouring pixels in the unit cube (discretized as a 200--200 grid) and set all other pixel values to zero. Let our discrete delta function be denoted by $\vv_{\delta}$. Then we approximate $\mathcal{T}^*\mathcal{T}\delta\approx A^TA\vv_{\delta}$.
\begin{figure}[!h]
\begin{subfigure}{0.47\textwidth}
\includegraphics[width=0.9\linewidth, height=0.9\linewidth]{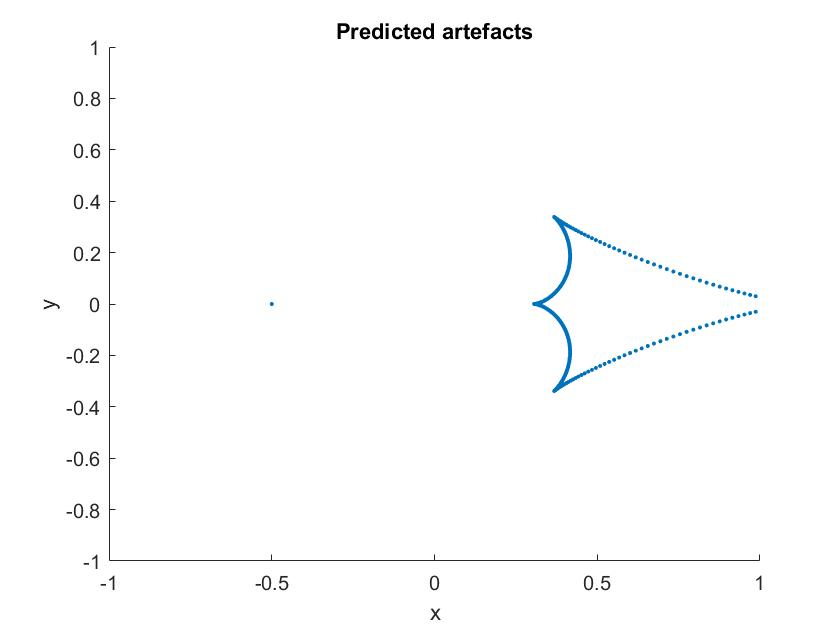} 
\end{subfigure}
\begin{subfigure}{0.47\textwidth}
\includegraphics[width=1.0\linewidth, height=0.9\linewidth]{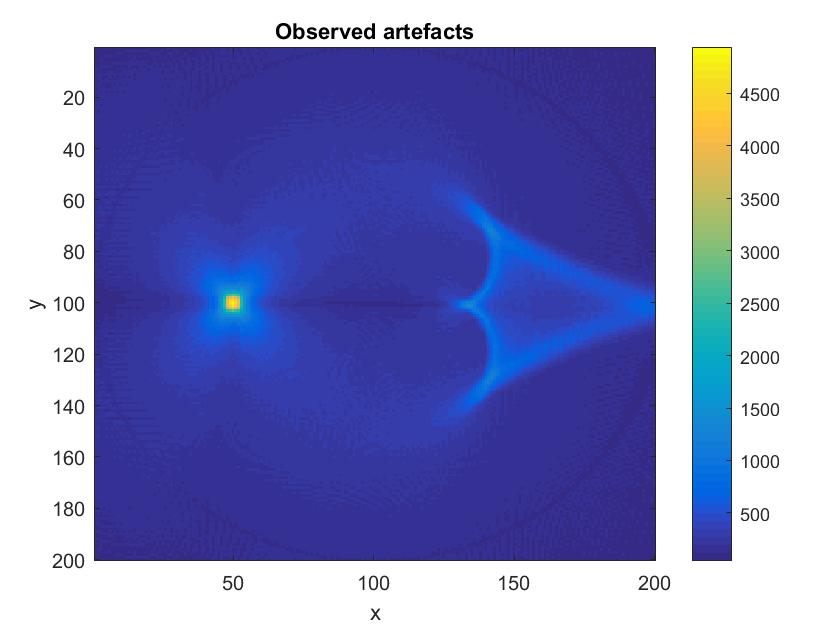}
\end{subfigure}
\caption{Predicted and observed artefacts from reconstructing a delta function closer to the origin by backprojection.}
\label{fig9}
\end{figure}
See figures \ref{fig8} and \ref{fig9}, where we have shown side by
side comparisons of the artefacts predicted by equations \eqref{art3} and \eqref{art4}
and the artefacts observed in a reconstruction by backprojection. See
also figures \ref{fig8.1} and \ref{fig9.1} for more simulated artefact
curves. Note that the blue dots in the left hand figures are the outputs of $\psi_1$ for $\alpha\in\left\{\frac{j\pi}{180} : 1\leq j\leq 180\right\}$ and $\psi_2$ for $\alpha\in\left\{-\frac{j\pi}{180} : 1\leq j\leq 180\right\}$. The observed artefacts are as predicted by the theory and the
images in the left and right hand sides of each figure superimpose
exactly. We notice a cardioid curve artefact in the reconstruction
which becomes a full cardioid when the delta function lies
approximately on the unit circle. 

To test our reconstruction techniques, we consider the test phantoms displayed in figure \ref{fig4}, one simple and one complex. 
\begin{figure}[!h]
\begin{subfigure}{0.47\textwidth}
\includegraphics[width=1.0\linewidth, height=0.9\linewidth]{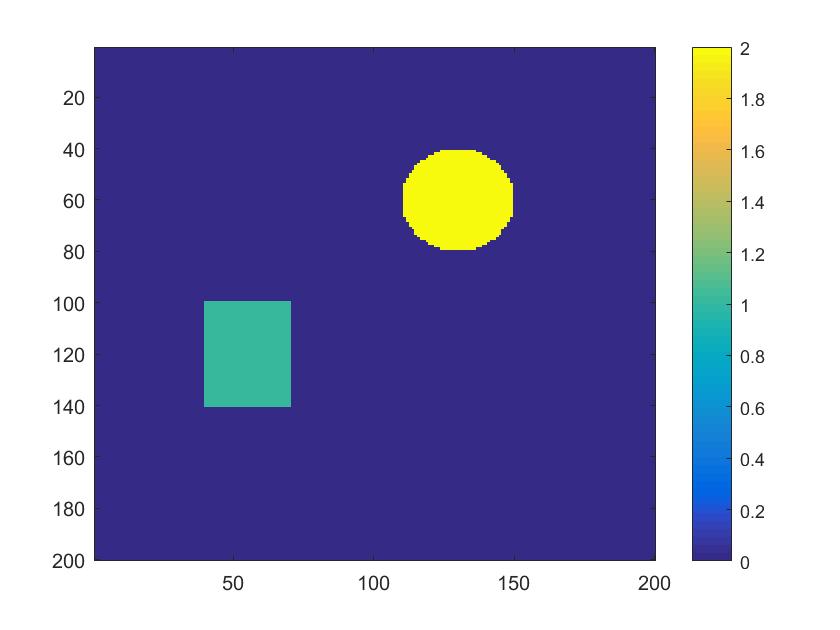} 
\end{subfigure}
\begin{subfigure}{0.47\textwidth}
\includegraphics[width=1.0\linewidth, height=0.9\linewidth]{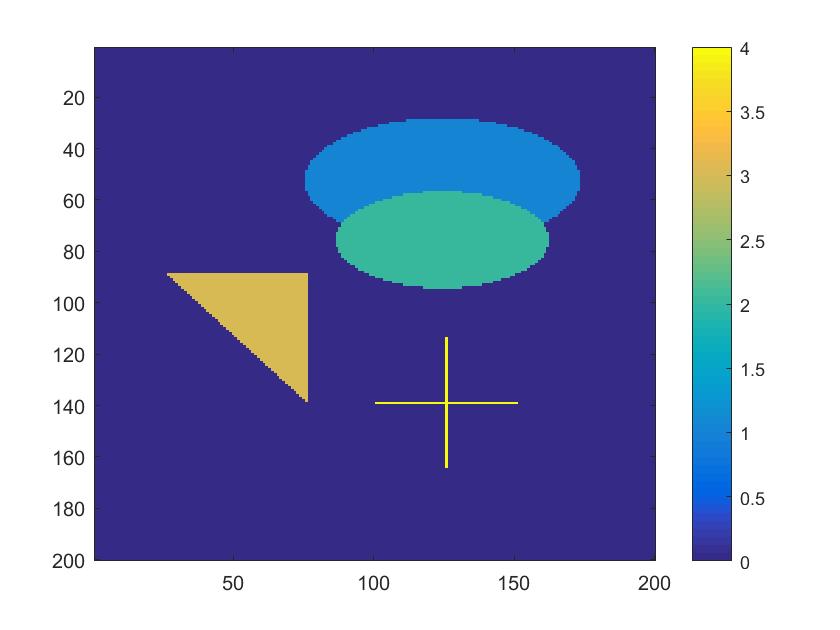}
\end{subfigure}
\caption{Simple (left) and complex (right) phantoms.}
\label{fig4}
\end{figure}
The simple phantom consists of a disc with value 2 and a square with value 1. The complex phantom consists of simulated objects of varying density, shape and size with overlapping ellipsoids, and is commonly used to test reconstruction techniques in tomography \cite{hansen}. See figures \ref{fig6}, \ref{fig7}, \ref{fig5}, \ref{fig5.1} for reconstructions of the two test phantoms using the Landweber method and a Conjugate Gradient Least Squares (CGLS) iterative solver \cite{hansen} with Tikhonov regularization (varying the regularization parameter $\lambda$ manually). In the absence of noise ($\epsilon=0$) there are significant artefacts in the reconstruction using a Landweber approach. CGLS performs well however on both test phantoms. In the presence of added noise (we consider noise levels of $1\%$ ($\epsilon=0.01$) and $5\%$ ($\epsilon=0.05$)) there are severe artefacts in the reconstruction using a CGLS with Tikhonov approach (see figures \ref{fig6} and \ref{fig7}), particularly with a higher noise level of $5\%$.

\begin{figure}[!h]
\begin{subfigure}{0.47\textwidth}
\includegraphics[width=1.0\linewidth, height=0.9\linewidth]{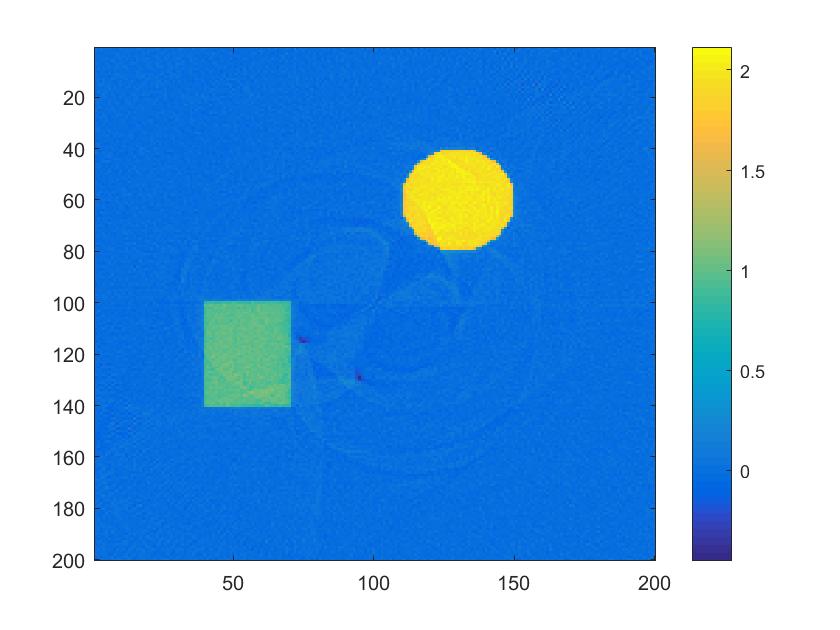} 
\end{subfigure}
\begin{subfigure}{0.47\textwidth}
\includegraphics[width=1.0\linewidth, height=0.9\linewidth]{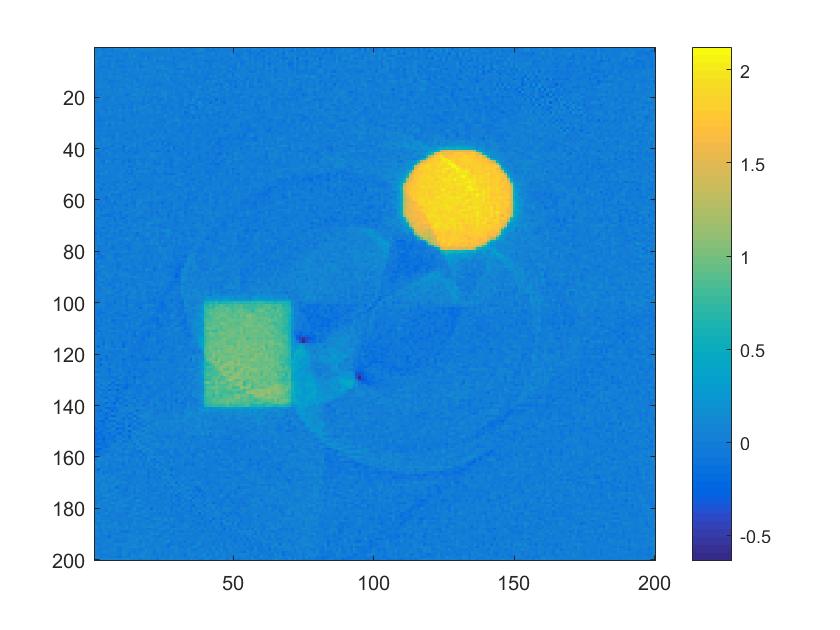}
\end{subfigure}
\caption{Simple phantom reconstruction using CGLS and Tikhonov as a regularizer, with noise levels of $1\%$ (left) and $5\%$ (right).}
\label{fig6}
\end{figure}
\begin{figure}[!h]
\begin{subfigure}{0.47\textwidth}
\includegraphics[width=1.0\linewidth, height=0.9\linewidth]{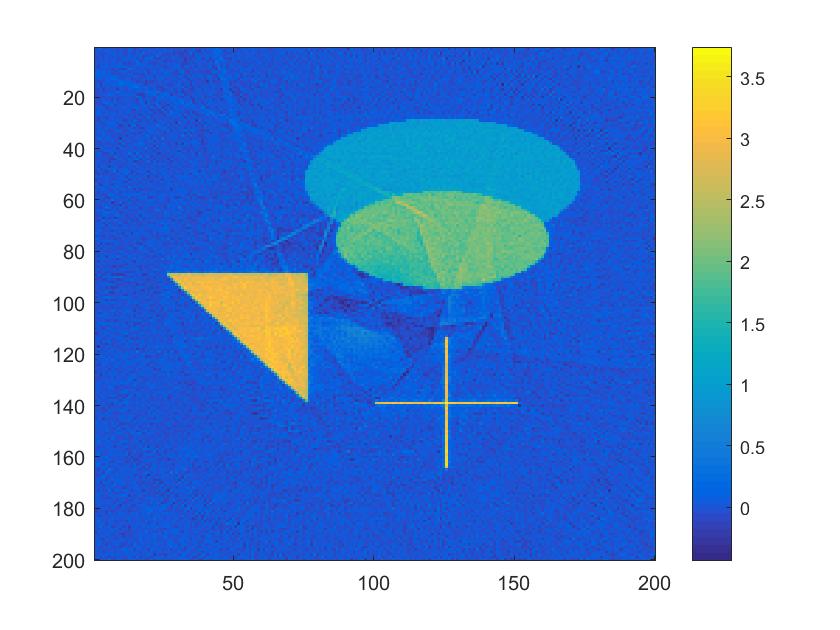} 
\end{subfigure}
\begin{subfigure}{0.47\textwidth}
\includegraphics[width=1.0\linewidth, height=0.9\linewidth]{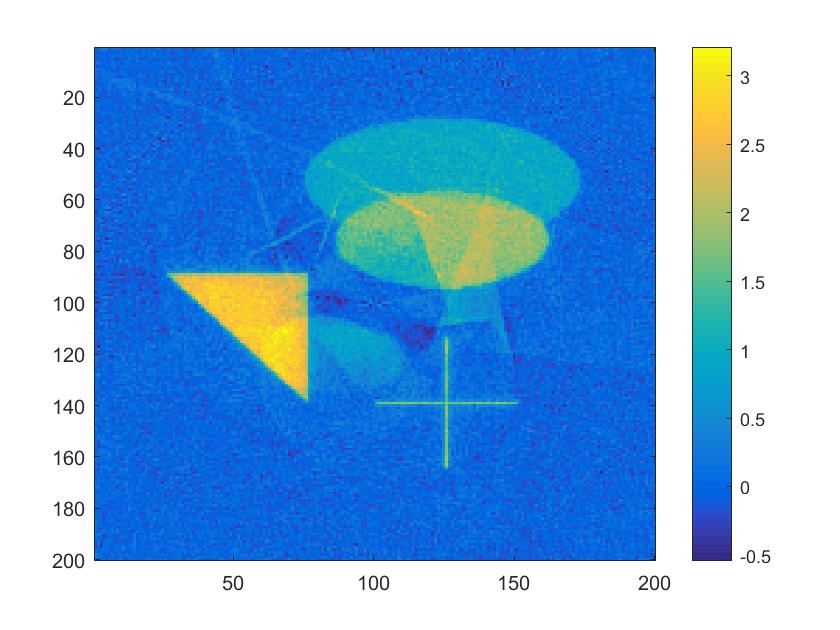}
\end{subfigure}
\caption{Complex phantom reconstruction using CGLS and Tikhonov as a regularizer, with noise levels of $1\%$ (left) and $5\%$ (right).}
\label{fig7}
\end{figure}
To combat the image artefacts we found that the use of an iterative approach with heuristic TV regularization (as described in \cite{IRtools}) was effective. Specifically we apply the method ``IRhtv" of \cite{IRtools} with added non--negativity constraints to the optimizer (as we know a--priori that a density is non--negative), and choose the regularization parameter $\lambda$ manually. For more details on the IRhtv method see \cite{gazzola2014generalized}. See figures \ref{fig6.1} and \ref{fig7.1}. 
\begin{figure}[!h]
\begin{subfigure}{0.47\textwidth}
\includegraphics[width=1.0\linewidth, height=0.9\linewidth]{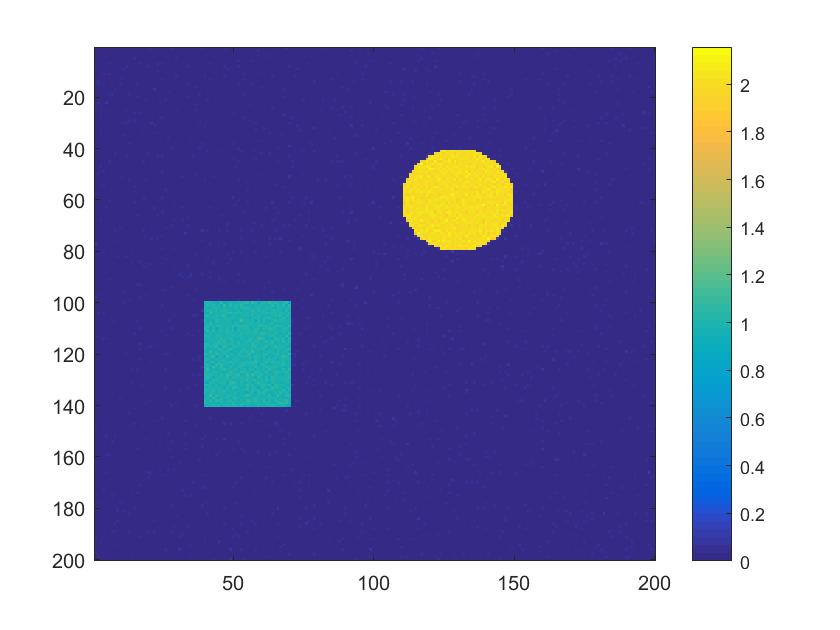} 
\end{subfigure}
\begin{subfigure}{0.47\textwidth}
\includegraphics[width=1.0\linewidth, height=0.9\linewidth]{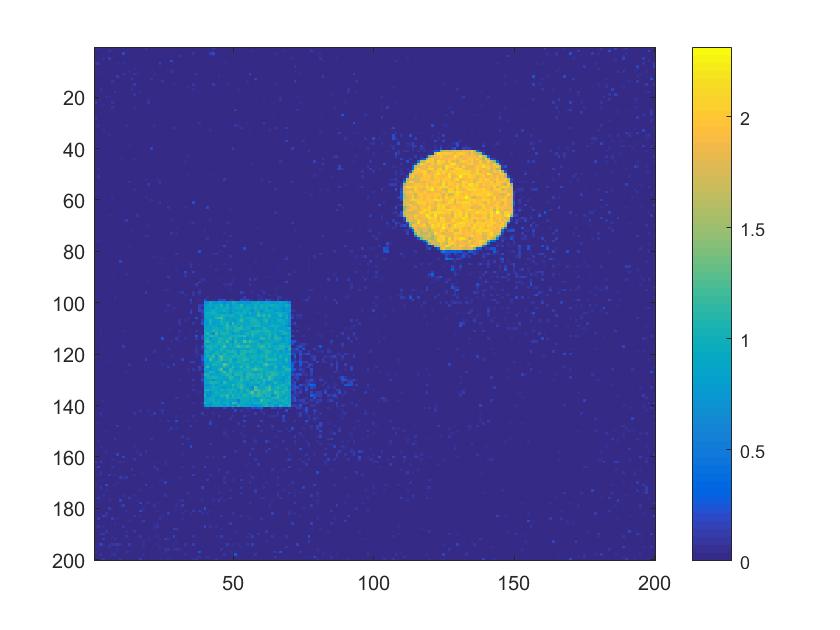}
\end{subfigure}
\caption{Simple phantom reconstruction using a heuristic TV regularizer, with noise levels of $1\%$ (left) and $5\%$ (right).}
\label{fig6.1}
\end{figure}
\begin{figure}[!h]
\begin{subfigure}{0.47\textwidth}
\includegraphics[width=1.0\linewidth, height=0.9\linewidth]{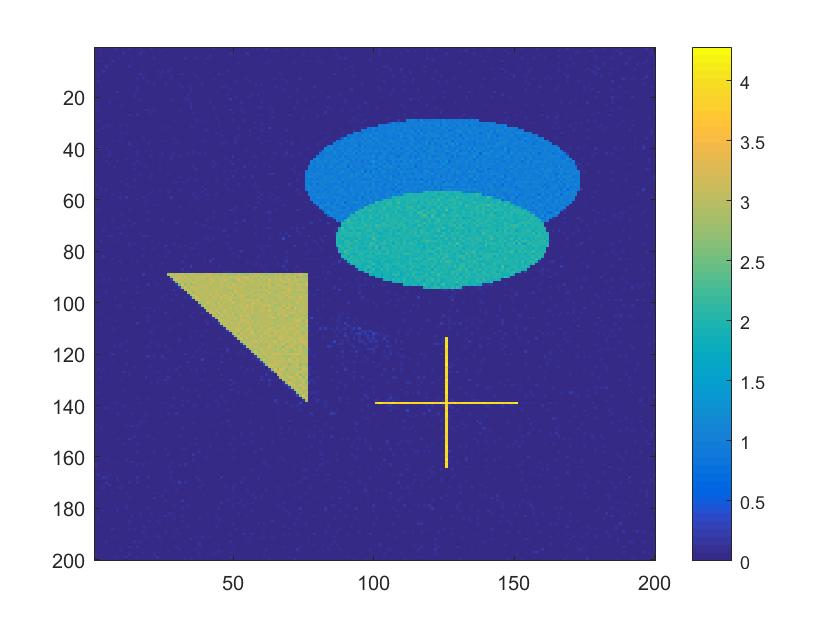} 
\end{subfigure}
\begin{subfigure}{0.47\textwidth}
\includegraphics[width=1.0\linewidth, height=0.9\linewidth]{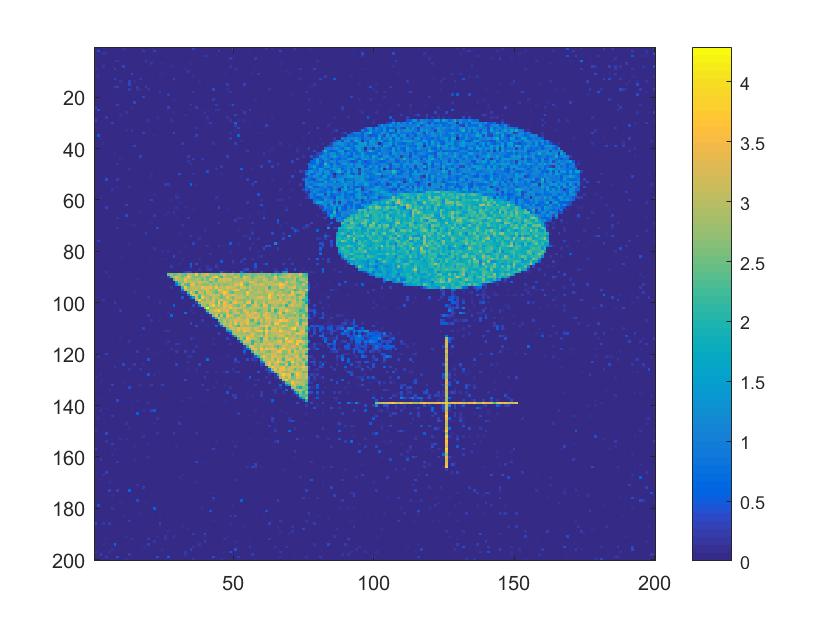}
\end{subfigure}
\caption{Complex phantom reconstruction using a heuristic TV regularizer, with noise levels of $1\%$ (left) and $5\%$ (right).}
\label{fig7.1}
\end{figure}
For a noise level of $1\%$ the artefacts are almost completely removed from the reconstructions (for both the simple and complex phantom) and the image quality is high overall. For a higher noise level of $5\%$ we see a significant reduction in the artefacts and the reconstruction is satisfactory in both cases with a low level of distortion in the image (although there is a higher distortion in the complex phantom reconstruction).

The predicted artefacts of figures \ref{fig8} and \ref{fig9} are also observed in a discrete reconstruction. See figure \ref{drec}, where we have presented reconstructions of a delta function using the three iterative methods considered in this paper, namely CGLS with Tikhonov, a Landweber iteration and the solvers of \cite{IRtools} with heuristic TV. The artefacts of figure \ref{fig8} can be observed faintly in the reconstruction using CGLS, and are most pronounced in the Landweber iteration. The heuristic TV approach gives the best performance (as before), although the reconstruction quality is more comparable among the three methods considered for a simple phantom such as a delta function.

For the application considered in this paper, namely threat detection in airport baggage screening, the removal of image artefacts and an accurate quantitative density estimation are crucial to maintain a satisfactory false positive rate. We will now further compare our results using CGLS with Tikhonov and the iterative solver of \cite{IRtools}, in terms of the false positive rate we can expect using both methods. Looking at the reconstructions using both methods qualitatively. In figure \ref{fig7} (using CGLS with Tikhonov), the image artefacts visually mask the four shapes which make up the original density. This may lead to threat materials or objects being misidentified (false negative errors). In addition, the artefacts introduce new ``fake" densities (e.g. streaks in the top left of the image) to the original, which may be wrongly interpreted as a potential threat by security personnel (a false positive error). In figure \ref{fig7.1} (using the iterative solver of \cite{IRtools}), with only a mild distortion in the image, we are less prone to such mistakes. 

For a brief quantitative analysis, let the ``cross" shaped object (with relative density 4) represent a detonator element and let the ``triangular" density (with relative density 3) represent a small plastic explosive. Then the presence of artefacts can introduce large errors in the density estimation. For example, let us consider the left hand image in figure \ref{fig7}. if we take the average pixel value of the reconstructed explosive and detonator, then the relative errors are
\begin{equation}
\text{errT}=100\times\frac{|\text{avgT}-3|}{3}=9.31\%,\ \ \ \ \text{errC}=100\times\frac{|\text{avgC}-4|}{4}=43.9\%,
\end{equation}
where $\text{avgT}=2.72$ and $\text{avgC}=2.25$ are the average pixel values for the reconstructed plastic explosive and detonator element respectively. Let us say we were using a look up table approach to threat detection (which is a common approach). That is we look for densities (of a large enough size) in a pre--specified set of values and flag these as a potential threat. In threat detection, we cannot allow any false negatives, so if the above error rates were as expected the space of potential threats (the set of suspicious density values) would have to be increased (to allow for errors up to $44\%$) in order to compensate and identify the explosive, thus increasing the false positive rate. 

If we now consider the same error rates for the left hand image in figure \ref{fig7.1}, then
\begin{equation}
\text{errT}=100\times\frac{|\text{avgT}-3|}{3}=0.27\%,\ \ \ \ \text{errC}=100\times\frac{|\text{avgC}-4|}{4}=2.00\%,
\end{equation}
where in this case $\text{avgT}=2.99$ and $\text{avgC}=3.92$. With such a reduction in the error rate, we can safely reduce the space of potential threats (now only allowing for errors less than $2\%$) in our look up table and hence reduce the expected false positive rate.

\section{Conclusion}
Here we have introduced a new toric section transform $\mathcal{T}$ which
describes a two dimensional Compton tomography problem in airport baggage
screening. A novel microlocal analysis of $\mathcal{T}$ was presented
whereby the reconstruction artefacts were explained through an analysis of
the canonical relation. This was carried out by an analysis of two circle
transforms $\mathcal{T}_1$ and $\mathcal{T}_2$, whose canonical relations
($\mathcal{C}_1$ and $\mathcal{C}_2$) were shown to satisfy the Bolker
Assumption when considered separately. When we considered their disjoint
union ($\mathcal{C}=\mathcal{C}_1\cup\mathcal{C}_2$), which describes the
canonical relation of $\mathcal{T}$, this was shown to be 2--1. We gave
explicit expressions for the image artefacts implied by the 2--1 nature of
$\mathcal{C}$ in section \ref{tsec1}.

The injectivity of $\mathcal{T}$ was proven on the set of $L^\infty$
functions $f$ with compact support in $B$. Here we used the
parameterization of circular arcs given by Nguyen and Truong in
\cite{NT} to decompose $\mathcal{T}f$ in terms of orthogonal special
functions (exploiting the rotational symmetry of $\mathcal{T}f$), and
then applied similar ideas to those of Cormack \cite{cormack} to prove
injectivity. 

In section \ref{res} we presented a practical reconstruction algorithm
for the reconstruction of densities from toric section integral data
using an algebraic approach. We proposed to discretize the linear
operator $\mathcal{T}$ on pixel grids (with the discrete form of
$\mathcal{T}$ stored as a sparse matrix) and to solve the
corresponding set of linear equations by minimizing the least squares
error with regularization. To do this we applied the iterative
techniques included in the package \cite{IRtools} and provided
simulated reconstructions of two test phantoms (one simple and one
complex) with varying levels of added pseudo-random noise. Here we
demonstrated the artefacts explained by our microlocal analysis
through a discrete application of the normal operator of $\mathcal{T}$
to a delta function, and showed (with a side by side comparison) that
the artefacts in the reconstruction were exactly as predicted by our
theory. We also showed that we could combat the artefacts in the
reconstruction effectively using an iterative solver with a
heuristic total variation penalty (using the code included in \cite{IRtools} for
solving large scale image reconstruction problems), and explained how
the improved artefact reduction implies a reduction in the false
positive rate in the proposed application in airport baggage
screening.

For further work we aim to consider more general acquisition
geometries for the reconstruction of densities from toric section
integral data in Compton scattering tomography. Here we have
considered the particular three dimensional set of toric sections
which describe the loci of scatterers for an idealised geometry for an
airport baggage scanner. We wonder if the 2--1 nature of the canonical
relation (or reflection artefacts) will be present for other toric
section transforms and we aim to say something more concrete about
this. For example, are reflection artefacts present or is the
canonical relation 2--1 for any toric section transform?

\begin{figure}[!h]
\begin{subfigure}{0.47\textwidth}
\includegraphics[width=1.0\linewidth, height=0.9\linewidth]{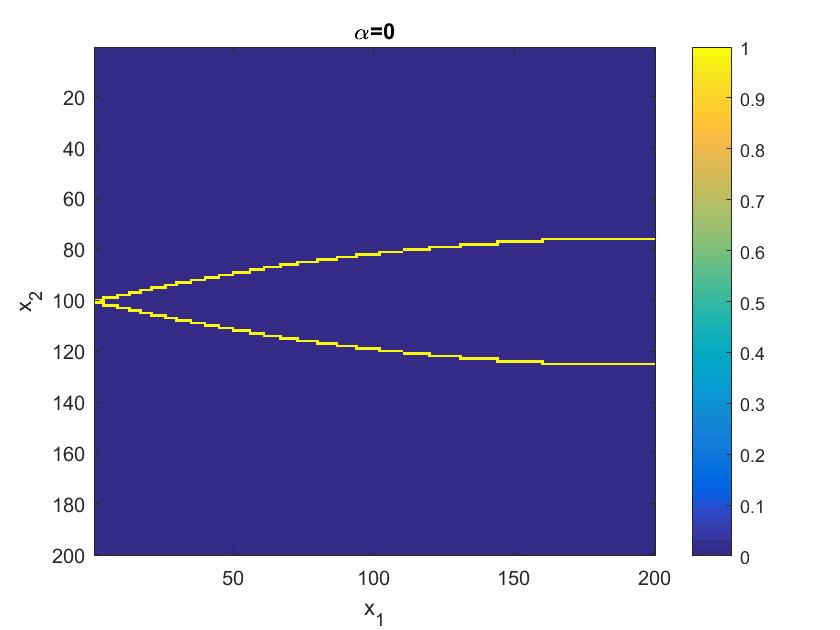}
\end{subfigure}
\begin{subfigure}{0.47\textwidth}
\includegraphics[width=1.0\linewidth, height=0.9\linewidth]{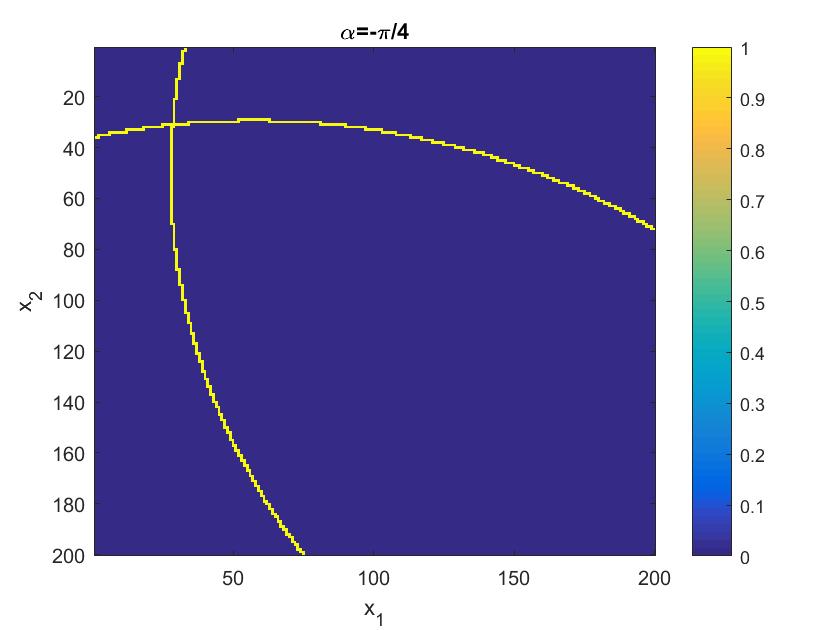}
\end{subfigure}
\begin{subfigure}{0.47\textwidth}
\includegraphics[width=1.0\linewidth, height=0.9\linewidth]{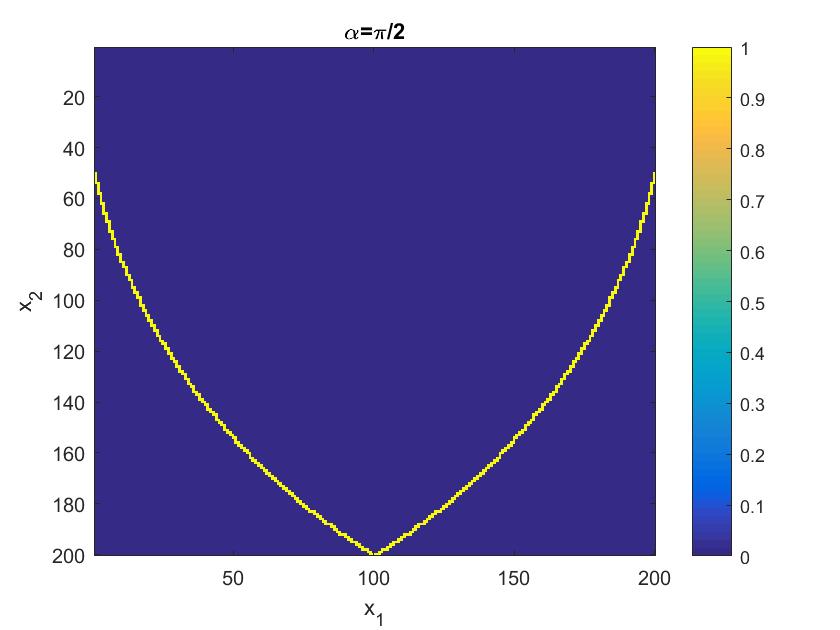} 
\end{subfigure}
\begin{subfigure}{0.47\textwidth}
\includegraphics[width=1.0\linewidth, height=0.9\linewidth]{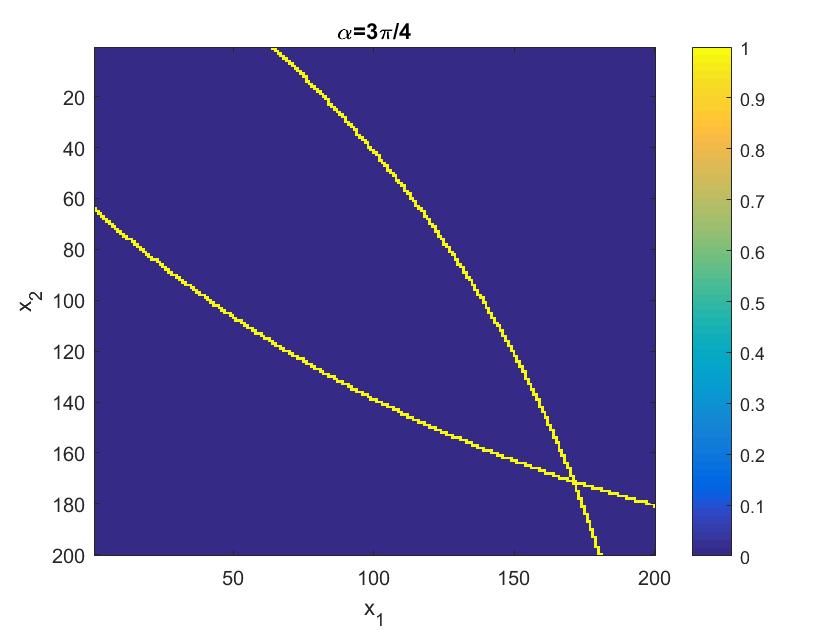}
\end{subfigure}
\caption{Discretized toric section integrals for varying rotation angles $\alpha$ and radii $r$ are presented as images. The images are binary (the pixel value is 1 if it intersected by a toric section and 0 otherwise).}
\label{fig1.1}
\end{figure}
\begin{figure}[!h]
\begin{subfigure}{0.47\textwidth}
\includegraphics[width=0.9\linewidth, height=7cm]{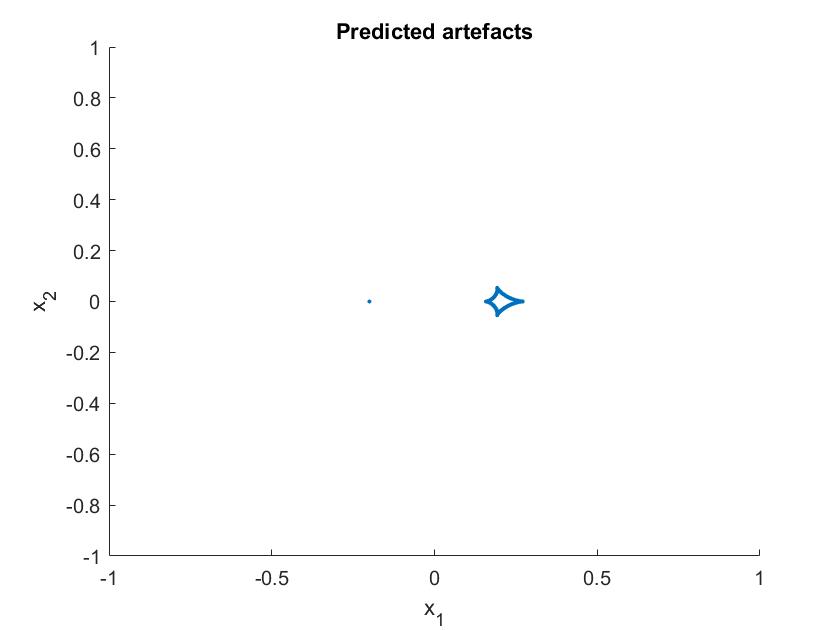} 
\end{subfigure}
\begin{subfigure}{0.47\textwidth}
\includegraphics[width=1.0\linewidth, height=7cm]{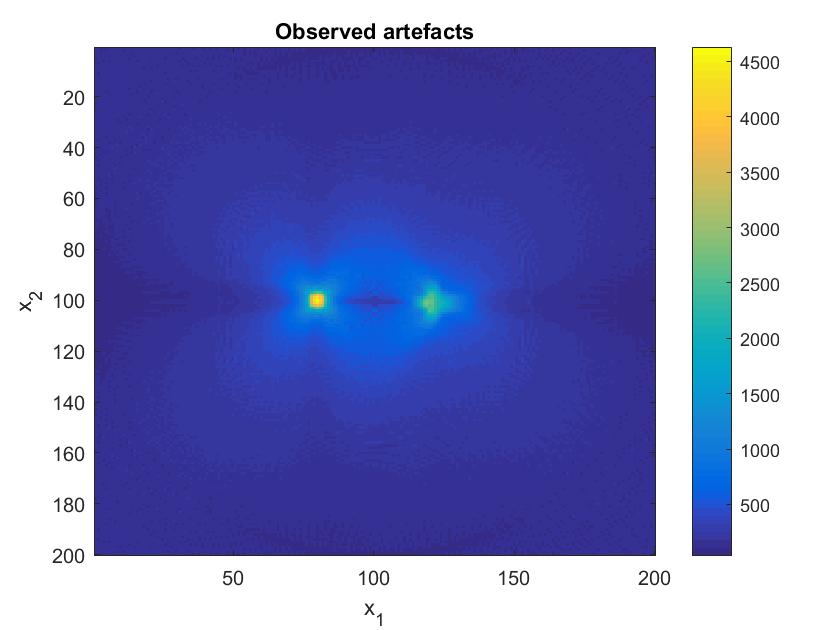}
\end{subfigure}
\caption{Predicted and observed artefacts from reconstructing a delta function close to the origin by backprojection.}
\label{fig8.1}
\end{figure}
\begin{figure}[!h]
\begin{subfigure}{0.47\textwidth}
\includegraphics[width=0.9\linewidth, height=7cm]{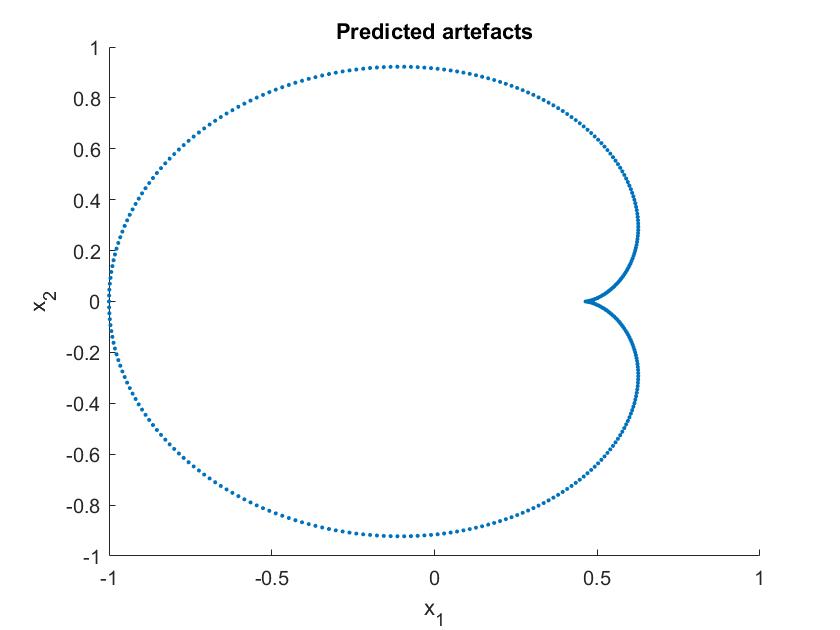} 
\end{subfigure}
\begin{subfigure}{0.47\textwidth}
\includegraphics[width=1.0\linewidth, height=7cm]{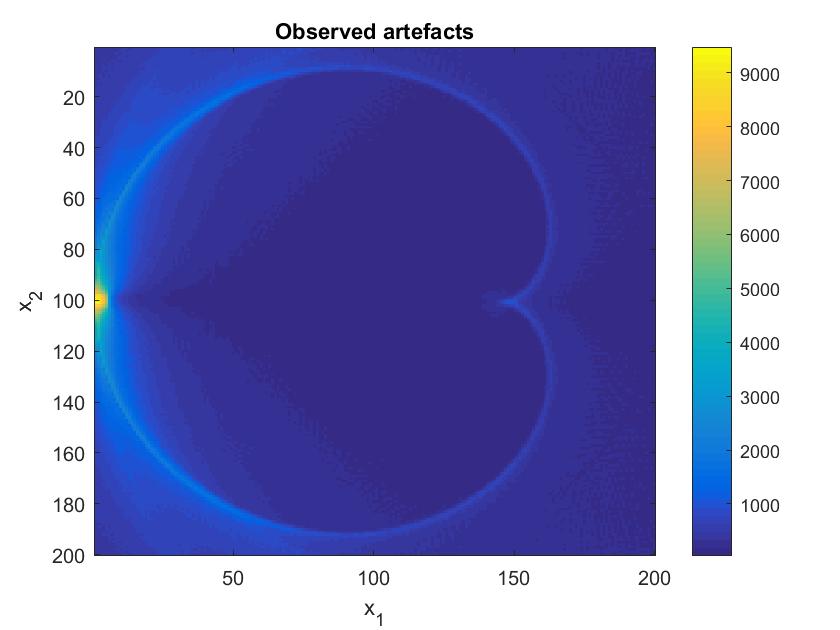}
\end{subfigure}
\caption{Predicted and observed artefacts from reconstructing a delta function on the boundary of the unit ball by backprojection. The artefacts are described by a cardioid.}
\label{fig9.1}
\end{figure}
\begin{figure}[!h]
\begin{subfigure}{0.47\textwidth}
\includegraphics[width=1.0\linewidth, height=0.9\linewidth]{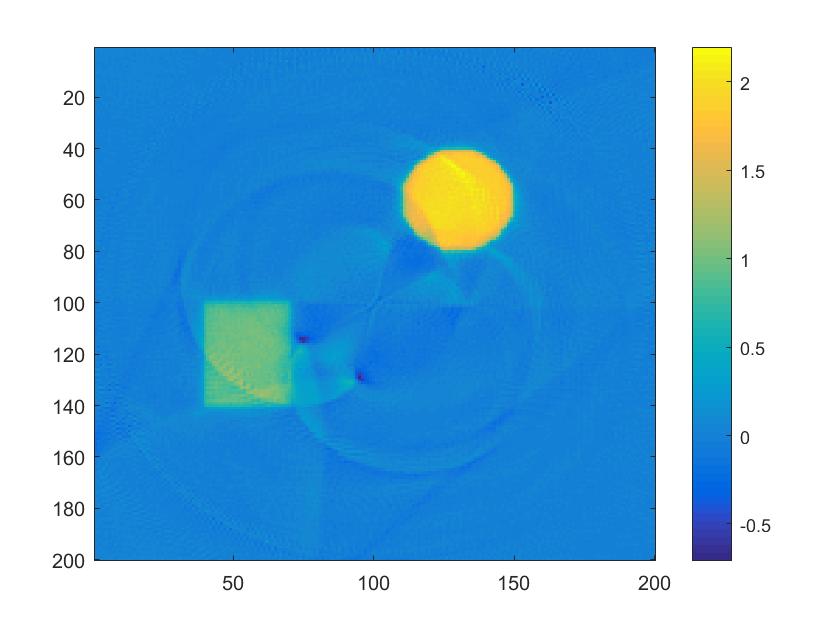} 
\end{subfigure}
\begin{subfigure}{0.47\textwidth}
\includegraphics[width=1.0\linewidth, height=0.9\linewidth]{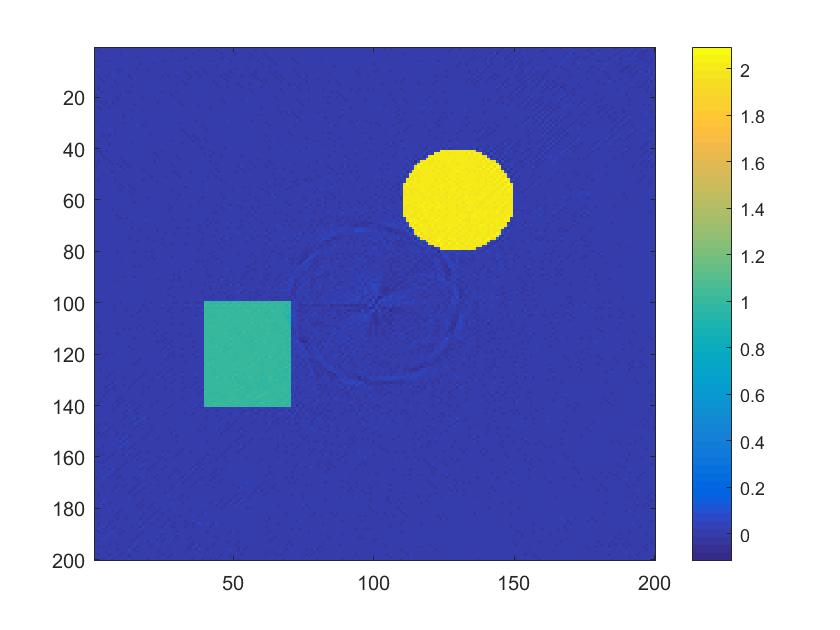}
\end{subfigure}
\caption{Reconstruction of simple phantom function using Landweber method and CGLS. No noise. Artefacts are present in Landweber iteration.}
\label{fig5}
\end{figure}
\begin{figure}[!h]
\begin{subfigure}{0.47\textwidth}
\includegraphics[width=1.0\linewidth, height=0.9\linewidth]{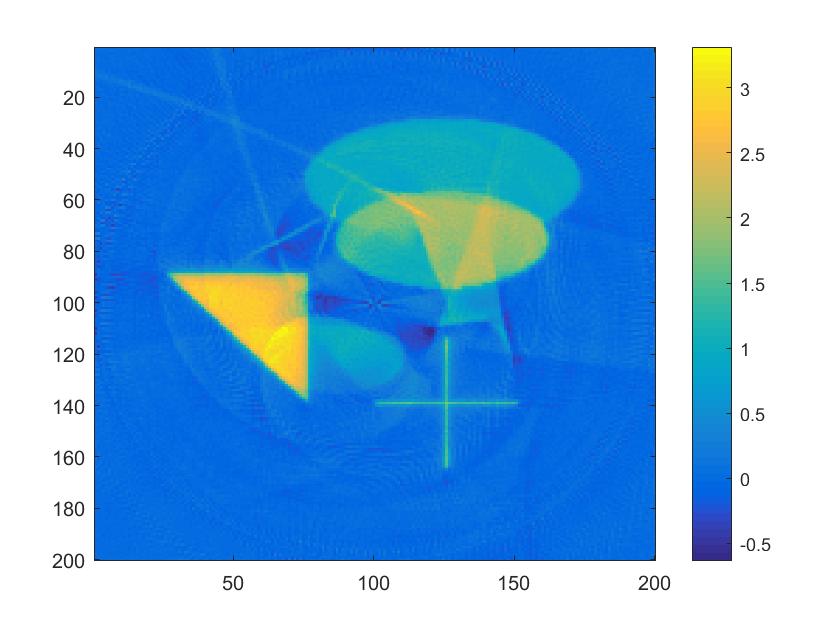} 
\end{subfigure}
\begin{subfigure}{0.47\textwidth}
\includegraphics[width=1.0\linewidth, height=0.9\linewidth]{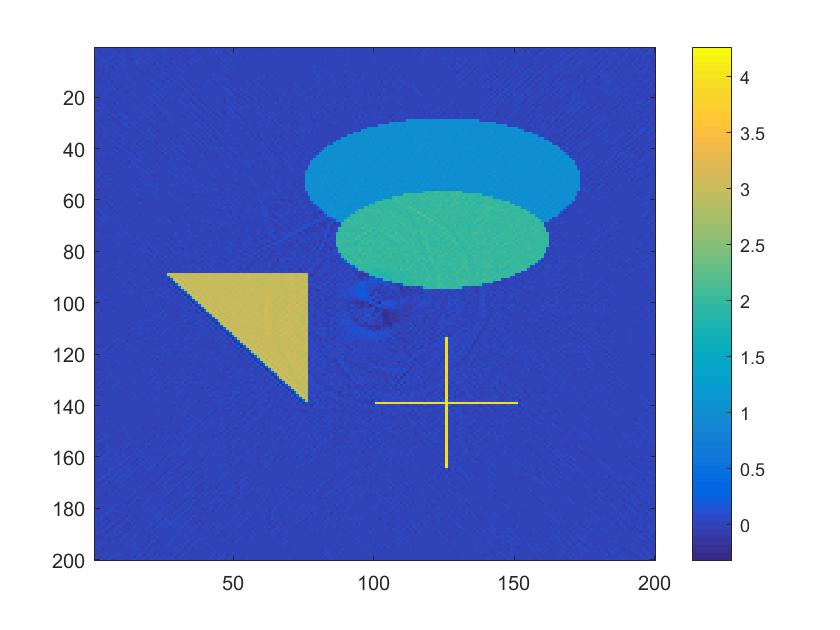}
\end{subfigure}
\caption{Reconstruction of complex phantom function using Landweber method and CGLS. No noise. Artefacts are present in Landweber iteration.}
\label{fig5.1}
\end{figure}


\clearpage
\appendix
\section{Potential application in airport baggage screening}
\label{app1}
Here we explain in more detail the proposed
application in airport baggage screening, and how the theory and
reconstruction methods presented in the main text relate to this field. In figure \ref{figRTT} we have displayed a machine configuration for RTT X-ray scanning in airport security screening (such a design is in use at airports today). The density $f$ is translated in the $x_3$ direction (out of the page) on a conveyor belt, and illuminated by a ring (the blue circle) of fixed-switched monochromatic (energy $E$) fan beam X-ray sources. The scattered intensity is then collected by a second ring (the green circle) of fixed energy-resolved detectors. The source and detector rings are coloured as in figures \ref{fig0} and \ref{fig1}.
\begin{figure}[!h]
\centering
\begin{tikzpicture}[scale=4]
\draw [thick, green] (0,0) circle [radius=0.7];
\draw [red, very thin] (-0.3,-0.2) rectangle (0.3,0.2);
\draw [very thin, dashed] (0,-1)--(0,0.7);
\draw [very thin, dashed] (0,-1)--(0.1,0.6928);
\draw [very thin, dashed] (0,-1)--(-0.1,0.6928);
\draw [very thin, dashed] (0,-1)--(0.2,0.6708);
\draw [very thin, dashed] (0,-1)--(-0.2,0.6708);
\draw [very thin, dashed] (0,-1)--(0.3,0.6324);
\draw [very thin, dashed] (0,-1)--(-0.3,0.6324);
\draw [very thin, dashed] (0,-1)--(0.4,0.5744);
\draw [very thin, dashed] (0,-1)--(-0.4,0.5744);
\draw [very thin] (0.9,0.9)--(0.3,0.6324);
\node at (1.1,1) {detector ring};
\draw [thick, blue] (0,0) circle [radius=1];
\draw [very thin] (-1.1,-0.9)--(-0.7,-0.71);
\node at (-1.1,-1) {source ring};
\draw [thick] (-0.4,-0.3)--(0.4,-0.3);
\draw [very thin] (-0.9,0.9)--(-0.3,0);
\node at (-0.9,1) {scanned object ($f$)};
\draw [very thin] (1.1,-0.9)--(0.3,-0.3);
\node at (1.1,-1) {conveyor belt};
\draw [red] circle [radius=0.57];
\draw [very thin] (1.3,0.3)--(0.57,0);
\node at (1.4,0.4) {scanning tunnel};
\draw [->,line width=1pt] (0,0)--(1.2,0)node[right]{$x_1$};
\draw [->,line width=1pt] (0,0)--(0,1.2)node[left]{$x_2$};
\end{tikzpicture}
\caption{A security scanning machine configuration is displayed. The source-detector ring offset is small and is modelled as zero.}
\label{figRTT}
\end{figure}
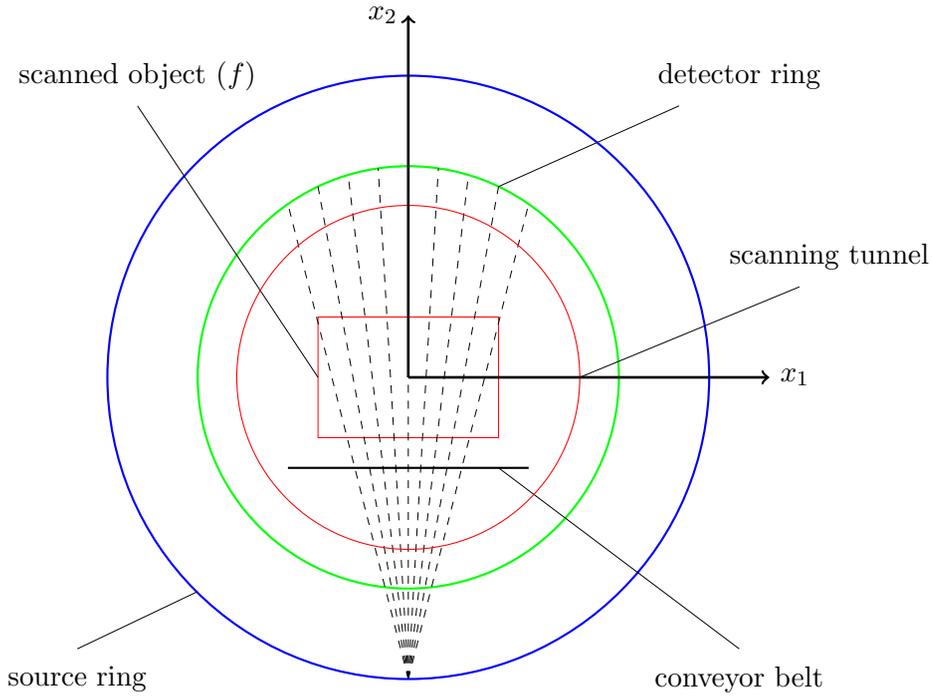

As is noted in the introduction (paragraph 3), the data are three dimensional. That is we can vary a source and detector position $(\vs,\vd)\in S^1\times S^1$ and the scattered energy $E'$ (since the detectors are energy-resolved). We consider the two dimensional subset of this data, when $\vs=-\vd$. Varying the source position $\vs$ (or $\vd$) corresponds to varying $\theta$ as in section \ref{tsec}. The scattered energy $E'$ determines $\cos\omega$ by equation \eqref{equ1} and in turn determines the torus radius
$$r=\frac{2}{\sqrt{1-\cos^2\omega}}.$$

The machine design of figure \ref{figRTT} has the
ability to measure a combination of transmission (straight through
photons) and scattered data. The photon counts measured when $E'=E$ (unattenuated photons) correspond to line integrals over the
attenuation coefficient $\mu_E$ (such as in standard transmission X-ray CT). The
Compton scattered data (for $E'<E$) determines the electron density
$f=n_e$ (by the theory of section \ref{tsec2}), and thus provides
additional information regarding the physical properties of the
scanned baggage. Hence we expect the use of the (extra) Compton data,
in conjunction with the transmission data, to allow for a more
accurate materials characterization (when compared to transmission or Compton tomography
separately) and to ultimately lead to a
more effective threat detection algorithm (e.g. reducing false
positive rates in airport screening). Such ideas have already been put
forward in \cite{me}, where a combination of $\mu_E$ and $n_e$
information is used to determine the effective atomic number of the
material.

\section{Additional reconstructions with analytic data}
\label{app2}
Here we present additional reconstructions with analytically generated $\Tc f$ data, using the same reconstruction method as before, minimizing the functional \eqref{recmth}. We consider the multiple ring phantom
\begin{equation}
f(\vx)=\sum_{j=1}^6j\chi_{B_{10,15}}\left(\vx-50\left(\cos\frac{j\pi}{3},\sin\frac{j\pi}{3}\right)\right)
\end{equation}
as displayed in figure \ref{A1}. Here $\chi_S$ denotes the characteristic function on $S$ and the reconstruction space is $[-100,100]^2$.
\begin{figure}[!h]
\begin{subfigure}{0.32\textwidth}
\includegraphics[width=0.9\linewidth, height=4cm]{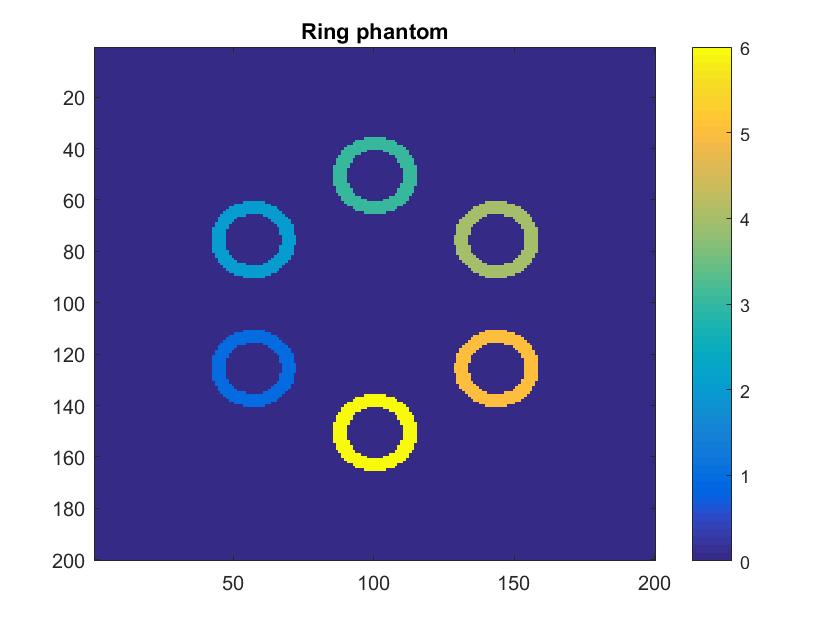}
\end{subfigure}
\begin{subfigure}{0.32\textwidth}
\includegraphics[width=0.9\linewidth, height=4cm]{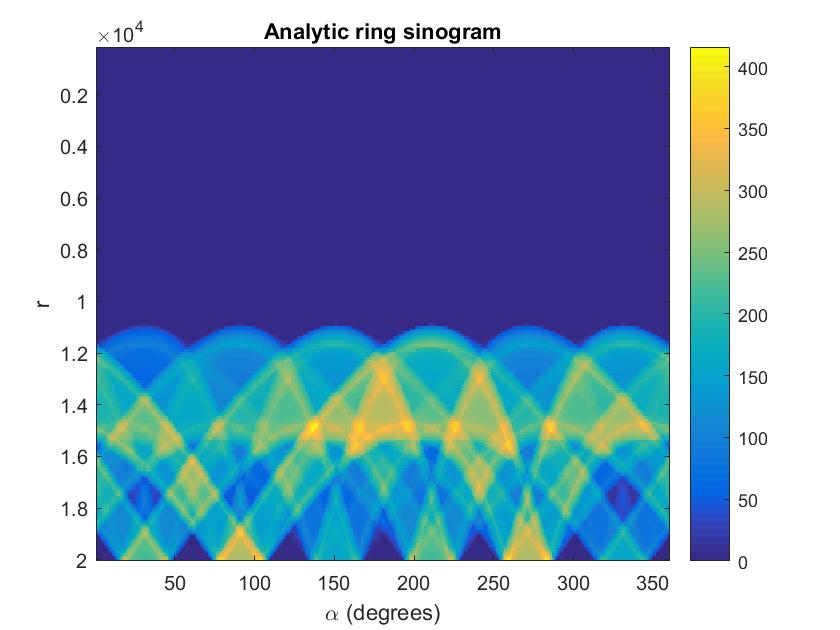} 
\end{subfigure}
\begin{subfigure}{0.32\textwidth}
\includegraphics[width=0.9\linewidth, height=4cm]{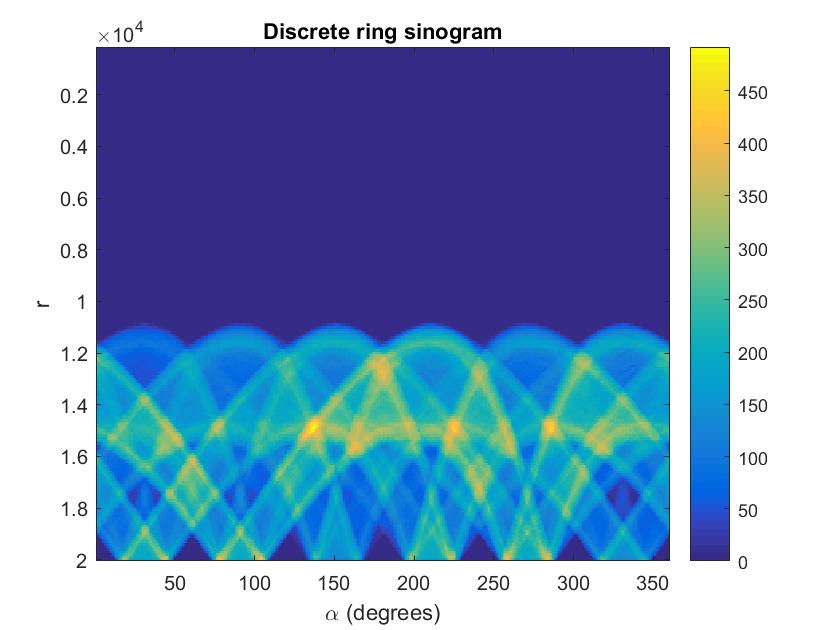}
\end{subfigure}
\caption{Ring phantom (left),  analytic sinogram (middle) and discrete sinogram (right)}
\label{A1}
\end{figure}
In this case the data are simulated as $\vb=\Tc
f(r,\alpha)$ for rotation angles $\alpha\in\left\{\frac{j\pi}{180} :
1\leq j\leq 360\right\}$ and for circle radii
$r\in\left\{\frac{j^2+200^2}{2j} : 1\leq j\leq 199\right\}$ (as in
section \ref{res}), and a Gaussian noise is added thereafter (as in
equation \eqref{noise}). See figure \ref{A1} for a comparison of the
analytic and discrete sinogram data. The discrete sinograms were
generated as before using $\vb=A\vv$ ($\vv$ is the discrete form of
$f$). The relative sinogram error is $\epsilon=\|\Tc f -
A\vv\|_2/\|\Tc f\|_2=0.11$, so in this case there is a significant
(systematic) error due to discretization. See figure \ref{A2} for
reconstructions of $f$ using the three methods considered in the main
text, namely Conjugate Gradient Least Squares (CGLS) with Tikhonov,
Landweber and heuristic Total Variation (TV). We present
reconstructions using analytic data with added noise and discrete data
with added noise for comparison. As in section \ref{res} we see the
best performance using heuristic TV. However there are additional
artefacts in the analytic reconstructions due to discretization
errors. Based on these experiments, it would be of benefit to
construct the discrete form of $\Tc$ ($A$) from exact circle-pixel
length intersections (as opposed to $A$ being a binary matrix).
However we leave this for further work.
\begin{figure}[!h]
\begin{subfigure}{0.32\textwidth}
\includegraphics[width=0.9\linewidth, height=4cm]{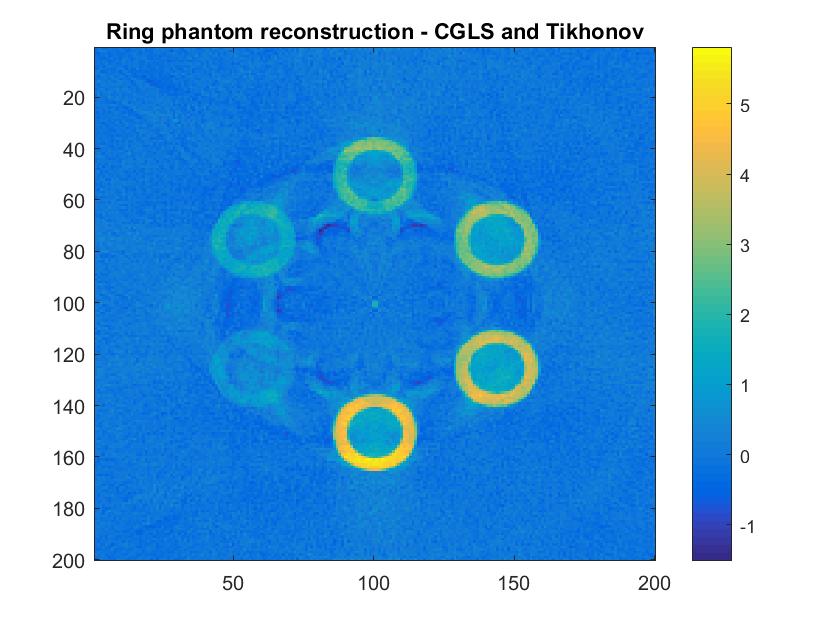}
\end{subfigure}
\begin{subfigure}{0.32\textwidth}
\includegraphics[width=0.9\linewidth, height=4cm]{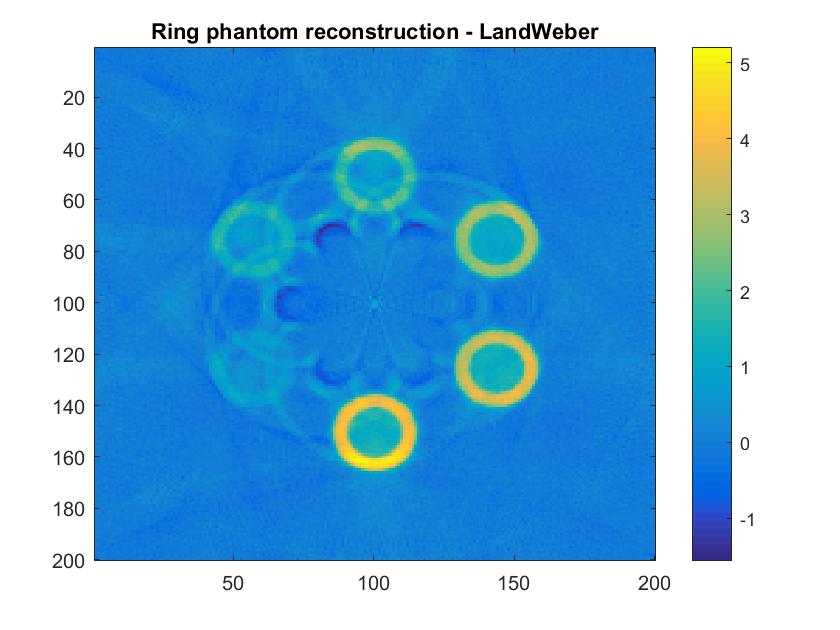} 
\end{subfigure}
\begin{subfigure}{0.32\textwidth}
\includegraphics[width=0.9\linewidth, height=4cm]{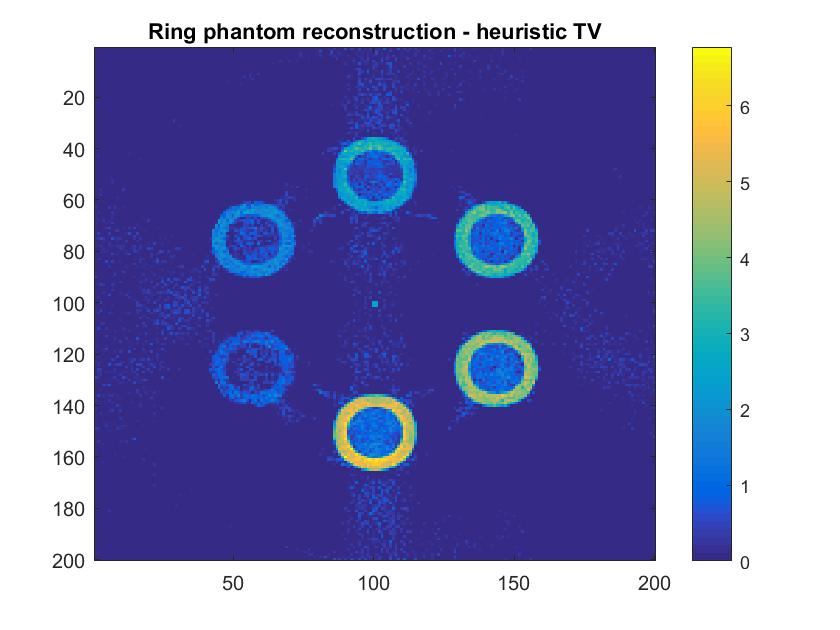}
\end{subfigure}

\begin{subfigure}{0.32\textwidth}
\includegraphics[width=0.9\linewidth, height=4cm]{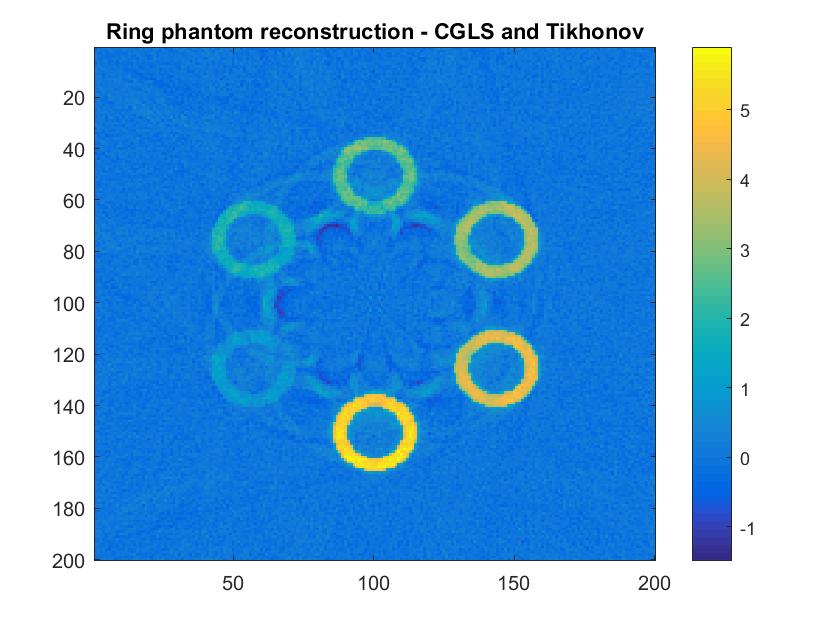}
\end{subfigure}
\begin{subfigure}{0.32\textwidth}
\includegraphics[width=0.9\linewidth, height=4cm]{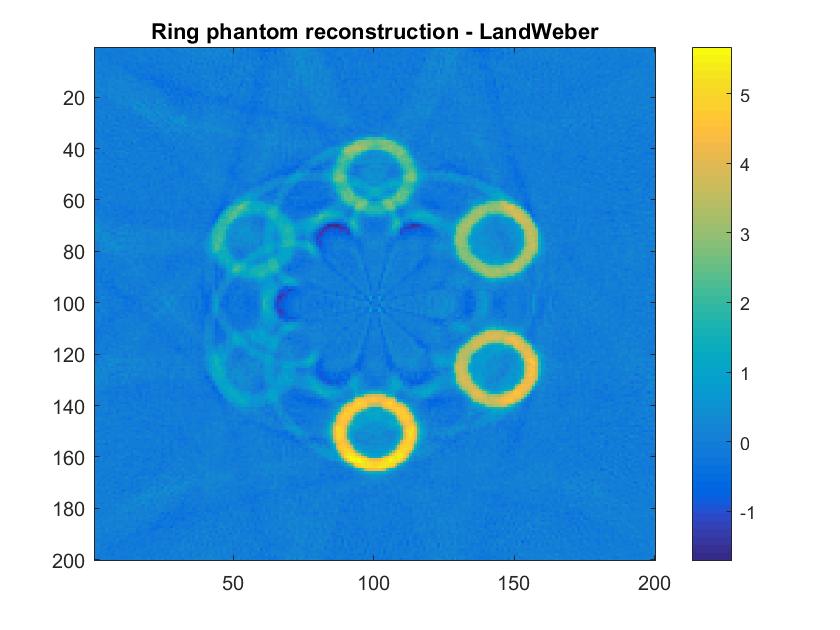} 
\end{subfigure}
\begin{subfigure}{0.32\textwidth}
\includegraphics[width=0.9\linewidth, height=4cm]{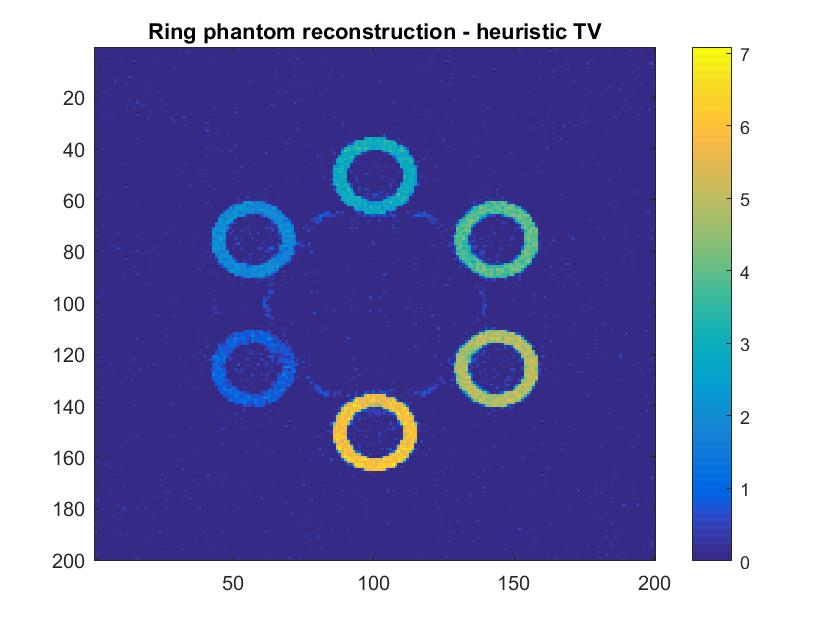}
\end{subfigure}
\caption{Top row --  Reconstructions with analytic data plus $5\%$ noise. Bottom row -- Reconstructions with discrete data plus $5\%$ noise (inverse crime).}
\label{A2}
\end{figure}

\section*{Acknowledgments}
 The authors thank Ga\"el Rigaud for
stimulating discussions about this research, in particular, about data
acquisition methods and the conversation that motivated Remark
\ref{rem:strength}.  The authors thank Eric Miller and his group for
providing a stimulating, supportive environment to do this research
and for providing the practical motivation for this work.  Finally, we
thank the journal editor for handling the article efficiently and the
referees for thoughtful, careful, insightful comments that improved
the article and helped clarify the proof of Theorem
\ref{thm:IntegralEqun}.  The work of the second author was partially
supported by U.S.\ National Science Foundation grant DMS 1712207. The first author was supported by the U.S. Department of Homeland Security, Science and Technology Directorate, Office of University Programs, under Grant Award 2013-ST-061-ED0001. The views and conclusions contained in this document are those of the authors and should not be interpreted as necessarily representing the official policies, either expressed or implied, of the U.S. Department of Homeland Security.

\bibliographystyle{siamplain}
\bibliography{references}

\end{document}